\newcommand{\isom}{\cong}
\newcommand{\Z}{{\bf{Z}}}
\newcommand{\Q}{{\bf{Q}}}
\newcommand{\R}{{\bf{R}}}
\newcommand{\C}{{\bf{C}}}
\newcommand{\T}{{\bf{T}}}
\newcommand{\m}{{\mathfrak{m}}}
\newcommand{\Mid}{|} 
\newcommand{\miD}{|}
\newcommand{\divs}{\!\mid\!}
\newcommand{\ndiv}{\!\nmid\!}
\newcommand{\tensor}{\otimes}
\newcommand{\ra}{{\rightarrow}}
  \newcommand{\textcyr}[1]{%
    {\fontencoding{OT2}\fontfamily{wncyr}\fontseries{m}\fontshape{n}%
     \selectfont #1}}
\newcommand{\Sha}{{\mbox{\textcyr{Sh}}}} 
\newcommand{\pp}{{\mathfrak{p}}}
\newcommand{\OO}{{\mathcal{O}}}
\newcommand{\Of}{{\OO_{\scriptscriptstyle{f}}}}
\bmdefine\bmu{\mu}
\newcommand{\comment}[1]{}
\DeclareMathOperator{\tor}{tor}
\newtheorem{lem}{Lemma}[section]
\newtheorem{cor}[lem]{Corollary}
\newtheorem{prop}[lem]{Proposition}
\newtheorem{conj}[lem]{Conjecture}
\newtheorem{thm}[lem]{Theorem}
\theoremstyle{definition}
\newtheorem{rmk}[lem]{Remark}
\newcommand{\thetitle}
{Reducibility and rational torsion in modular abelian varieties}
\begin{document}
\parindent=2em

\title{\thetitle}
\author{Amod Agashe and Matthew Winters}

\date{}
\maketitle

\begin{abstract}
Let $N$ be a square-free positive integer and let
$f$ be a newform of weight~$2$ on~$\Gamma_0(N)$. 
Let $A$ denote the abelian subvariety of~$J_0(N)$ associated to~$f$ and
let ${\m}$ be a maximal ideal of the Hecke algebra that contains 
${\rm Ann}_{\T} f$ and has residue characteristic~$r$ such that
$r \ndiv 6N$. 
We show that if either $A[{\m}]$ or
the canonical representation~$\rho_{{\m}}$ over~$\T/\m$ associated to~$\m$
is reducible, then $r$ divides the order of the cuspidal subgroup of~$J_0(N)$
and $A[{\m}]$ has a nontrivial rational point. We mention some 
applications
of this result, including an application 
to the second part of the Birch and Swinnerton-Dyer 
conjecture for~$A$.
\end{abstract}

\section{Introduction} \label{intro}
Let $N$ be a positive integer. Let
$X_0(N)$ denote the modular curve over~$\Q$ associated to~$\Gamma_0(N)$,
and let $J_0(N)$ denote its Jacobian, which is an abelian variety
over~$\Q$.
Let $f$ be a newform of weight~$2$ on~$\Gamma_0(N)$. 
Let $\T$ denote the subring of endomorphisms of~$J_0(N)$
generated by the Hecke operators (usually denoted~$T_\ell$
for $\ell \ndiv N$ and $U_p$ for $p\divs N$). 
Let $I_f = {\rm Ann}_{\T} f$ and let 
$A$ denote the abelian subvariety of~$J_0(N)$ associated to~$f$
(see~\cite[Theorem~7.14]{shimura:intro}).

If $f$ has integer Fourier coefficients, then $A$ is just an elliptic curve,
and it is said to be {\em optimal}; 
suppose
that is the case for the moment. 
Let $r$ be a prime number.
If $A$ has a nontrivial rational point of order~$r$, then clearly
$A[r]$ is reducible
as a representation of~${\rm Gal}(\overline{\Q}/\Q)$. The converse
need not be true.
For example,
for $A = 99d1$ (the notation is as in~\cite{cremona:algs}), $A[5]$ is reducible, but $A$ has no
nontrivial rational $5$-torsion. 
However, the first author had conjectured:

\begin{conj}\cite[Conjecture~2.5]{conjs} \label{conj1}
Suppose $N$ is square-free, i.e., $A$ is semistable, 
and $r$ is an odd prime. 
If $A[r]$ is
reducible, then $A$ has a nontrivial rational~$r$-torsion point 
(recall that we are assuming that $A$ is optimal).
\end{conj}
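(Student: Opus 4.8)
The plan is to determine the two Jordan--H\"older characters of the reducible representation and then to use the arithmetic of the cuspidal subgroup to force the trivial character to occur as a \emph{sub}representation of $A[r]$. Write $G_\Q = {\rm Gal}(\Qbar/\Q)$. Since $A[r]$ is reducible there is a $G_\Q$-stable line $L \subset A[r]$; let $\chi$ be the character of $G_\Q$ on $L$ and $\psi$ the character on $A[r]/L$, both valued in $\F_r^\times$. The Weil pairing identifies $\det A[r]$ with the mod-$r$ cyclotomic character $\omega$, so $\chi\psi = \omega$. A nontrivial rational $r$-torsion point on $A$ is exactly the assertion that $\chi$ is trivial, so the whole problem is to pin down $\chi$.

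First I would show $\chi$ and $\psi$ are unramified outside $r$. For $p \divs N$ the curve $A$ has multiplicative reduction (this is where square-freeness of $N$ enters: $A$ is semistable), so by the theory of the Tate curve the inertia group $I_p$ acts on $A[r]$ through a unipotent group of order dividing $r$; as $\chi,\psi$ take values in the prime-to-$r$ group $\F_r^\times$ they must be trivial on $I_p$. For $p \ndiv rN$ good reduction gives the same. Hence $\chi$ and $\psi$ are unramified outside $r$, and by Kronecker--Weber any such $\F_r^\times$-valued character factors through ${\rm Gal}(\Q(\zeta_r)/\Q) \isom \F_r^\times$; that is, $\chi = \omega^a$ and $\psi = \omega^{1-a}$ for some $a$. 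Next I would use that $r \ndiv N$, so $A$ has good reduction at $r$. Were $A$ supersingular at $r$, the restriction of $A[r]$ to the decomposition group $G_{\Q_r}$ would be irreducible (its inertial characters would be the fundamental characters of level~$2$), contradicting reducibility; so $A$ is ordinary at $r$, and the connected--\'etale sequence of the finite flat group scheme $A[r]$ over $\Z_r$ shows that its two Jordan--H\"older characters restricted to $I_r$ are $\omega\big|_{I_r}$ (the multiplicative part) and the trivial character (the \'etale part). Comparing with $\chi = \omega^a$ forces $a \equiv 0$ or $a \equiv 1 \pmod{r-1}$, i.e.\ $\{\chi,\psi\} = \{1,\omega\}$. (If instead $r \divs N$, one replaces good reduction by the Tate-curve description at $r$, landing in the same dichotomy.)

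It remains to show that the trivial character occurs on the sub-line $L$ rather than only on the quotient; equivalently, to rule out the case $\chi=\omega$, $\psi=1$, in which it is the isogenous curve $A/\mu_r$, not $A$ itself, that acquires the rational point. (This is precisely the phenomenon behind $99d1$, whose conductor is not square-free.) Here I would bring in the cuspidal subgroup. Reducibility of $A[r]$ is equivalent to the maximal ideal $\m = (r,\If)$ being \emph{Eisenstein}, so that $T_\ell \equiv 1+\ell \pmod{\m}$ for $\ell \ndiv N$; following Mazur's Eisenstein-ideal analysis, suitably extended to square-free level, an Eisenstein maximal ideal of residue characteristic $r$ forces $r$ to divide the order of the cuspidal subgroup $C$ of $J_0(N)$. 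Since $N$ is square-free all cusps of $X_0(N)$ are rational, so $C$ is a constant (trivial-Galois) subgroup of $J_0(N)(\Q)$, and $C[\m] \neq 0$ realizes the trivial character as a submodule of $J_0(N)[\m]$. Transferring this along the embedding $A \hra J_0(N)$ of the optimal curve---using that at such $\m$ the representation $\rho_\m$ occurs in $J_0(N)[\m]$ with multiplicity one, so that $A[\m]=A[r]$ already contains the cuspidal line---identifies $L$ with the trivial submodule, giving $\chi=1$ and hence a nontrivial rational $r$-torsion point on $A$.

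The main obstacle is this last step: distinguishing $(\chi,\psi)=(1,\omega)$ from $(\omega,1)$ and showing the cuspidal point lives in $A$ itself, not merely in $J_0(N)$ or on an isogenous curve. The clean representation-theoretic part only yields $\{\chi,\psi\}=\{1,\omega\}$; it is the interaction of the Eisenstein ideal, the cuspidal subgroup, and the optimality of~$A$---together with the module structure of $J_0(N)[\m]$ at an Eisenstein prime, which can be delicate---that pins the trivial factor to the sub. This is exactly where square-freeness and optimality are indispensable, and I expect controlling $J_0(N)[\m]$ (equivalently, showing the cuspidal line injects into $A[\m]$) to be the crux of the argument.
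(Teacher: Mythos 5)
Your steps (1)--(4) --- unramifiedness of $\chi,\psi$ outside $r$ via semistability and the Tate curve, the Kronecker--Weber reduction to $\chi=\omega^a$, the exclusion of supersingular reduction at $r$, and the connected--\'etale analysis giving $\{\chi,\psi\}=\{1,\omega\}$ --- are correct, but they only recover what the paper itself records as known (quoting Serre in Section~\ref{sec:app}): for a semistable elliptic curve the two Jordan--H\"older characters are $1$ and the mod-$r$ cyclotomic character, and the entire content of the conjecture is deciding that the trivial character occurs on the \emph{sub}-line. Precisely at that point your argument has a genuine gap. The claim that $\rho_{\m}$ occurs in $J_0(N)[\m]$ with multiplicity one at an Eisenstein maximal ideal is not justified, and it is in fact known to fail in general at Eisenstein primes of composite square-free level (failures of multiplicity one at levels $pq$ were exhibited by Yoo, building on work of Ribet), so it cannot be invoked as a black box. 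Moreover, even granting multiplicity one, the inference ``so $A[\m]$ already contains the cuspidal line'' does not follow: $A[\m]$ is a nonzero $\T/\m[{\rm Gal}(\overline{\Q}/\Q)]$-submodule of the \emph{reducible} module $J_0(N)[\m]$, so a priori it could consist of (or meet $C$ only through) the $\omega$-isotypic line, with the constant cuspidal line intersecting $A$ trivially --- this is exactly the $(\chi,\psi)=(\omega,1)$ scenario you set out to exclude, and nothing in your sketch excludes it. Establishing $A[\m]\cap C\neq 0$ is the crux, and the paper obtains it (in Theorem~\ref{thm:main}) by a different mechanism: a congruence of $f$ with an explicit Eisenstein series modulo a prime $\pp$ over $r$, the Atkin--Lehner sign result (Proposition~\ref{prop:mazur2}), ordinarity of $f$ at $\pp$, and a theorem of Tang --- and even that route requires $r\ndiv 6N$, whereas the conjecture allows $r=3$ and $r\divs N$.

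For comparison, the paper's own proof of this statement (Theorem~\ref{vatsal}) is a two-line citation to Vatsal: Lemma~5.2 of~\cite{vatsal:mult} gives ordinarity at $r$ (which your supersingular-irreducibility argument correctly reproves), and Proposition~5.3(ii) of~\cite{vatsal:mult} then directly produces the nontrivial rational $r$-torsion point, i.e., supplies exactly the sub-versus-quotient step you left open. Note also that your intermediate appeal ``Eisenstein $\m$ forces $r\divs |C|$'' at square-free level (Ohta-type index results) itself needs the residue characteristic prime to $6$, so even after patching the multiplicity-one step your route would at best prove the conjecture for $r\ndiv 6N$, not for all odd $r$ as stated.
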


Note that if $A[2]$ is reducible, then being two dimensional, 
it has a one dimensional invariant subspace, which has to be of the form 
$\{O,P\}$ for some point~$P$; 
the Galois group has to preserve~$O$ and any element $\sigma$ of it cannot move~$P$ to~$O$, 
since its inverse would move~$O$ to~$P$, and thus $\sigma$ has to preserve~$P$, 
making~$P$ a rational point. So the hypothesis that~$r$ 
is odd could have been dropped in the conjecture above, but is 
relevant for a suitable generalization of the conjecture to more general 
abelian varieties associated to newforms (where the argument above does not work).

The conjecture above follows 
from~\cite{vatsal:mult}, though it is not stated as a result there,
since loc. cit. is primarily concerned with other topics.
For the benefit of the literature, we state it as a theorem:

\begin{thm}[Vatsal] \label{vatsal}
If $A$ is an optimal 
semistable elliptic curve
and $r$ is a prime such that 
$A[r]$ is reducible, then $A$ has a nontrivial rational $r$-torsion point.
\end{thm}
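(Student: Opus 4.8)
The plan is to analyze the two-dimensional representation $A[r]$ of $G_{\Q} := \mathrm{Gal}(\Qbar/\Q)$ over $\F_r$ through its semisimplification. First I would dispose of the case $r=2$ using the elementary observation already recorded in the introduction, so assume $r$ is odd. If $A[r]$ is reducible, then its semisimplification is $\chi \oplus \chi'$ for two characters $\chi,\chi' : G_{\Q} \to \F_r^{\times}$, and since $A$ is an elliptic curve the Weil pairing gives $\det A[r] = \omega$, the mod~$r$ cyclotomic character; hence $\chi\chi' = \omega$. The goal is then to show that the trivial character occurs, and occurs as a \emph{sub}-representation, since a trivial sub-line of $A[r]$ is exactly a nontrivial rational point of order~$r$.

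The next step is to use semistability to pin down $\chi$ and $\chi'$. At a prime $p\ndiv rN$ the curve has good reduction and $A[r]$ is unramified, so $\chi,\chi'$ are unramified at~$p$. At a prime $p\divs N$ with $p\neq r$ the curve has multiplicative reduction, and the theory of the Tate curve shows that $A[r]$ restricted to the decomposition group at~$p$ is, up to an unramified twist, an extension of the trivial character by~$\omega$; since $\omega$ is unramified at $p\neq r$ and the ramification lives entirely in the (unipotent) extension class, both Jordan--Hölder factors are unramified at~$p$. Thus $\chi$ and $\chi'$ are unramified outside~$r$, and because $\Q$ admits no nontrivial everywhere-unramified abelian extension, both are powers of~$\omega$. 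Examining the prime~$r$ finishes the determination: good supersingular reduction is impossible, as it would make $A[r]$ restricted to inertia at~$r$ irreducible, contradicting global reducibility, while in the good ordinary and multiplicative cases the connected--étale (respectively Tate) filtration forces the pair of inertial characters at~$r$ to be $\{\omega,\mathbf{1}\}$; since $\omega$ is totally ramified at~$r$ this yields $\{\chi,\chi'\}=\{\mathbf{1},\omega\}$ globally. In other words $f$ is congruent modulo~$r$ to the weight-two Eisenstein series, i.e. the representation is Eisenstein.

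The main obstacle is the final step: upgrading ``the trivial character is a Jordan--Hölder factor of $A[r]$'' to ``the trivial character is a sub-representation.'' This cannot follow from self-duality alone, since the Weil pairing is alternating, so the unique line in a hypothetical non-split extension with trivial quotient is its own annihilator and no second stable line is produced. This is precisely where semistability (square-freeness of~$N$) is indispensable, and where the non-semistable curve $99d1$ shows the conclusion genuinely fails. Here I would invoke the Eisenstein-ideal machinery: once the representation is Eisenstein one expects~$r$ to divide the order of the cuspidal subgroup of $J_0(N)$, and the image in~$A$ of this canonical rational subgroup supplies the desired rational $r$-torsion point, the optimality of~$A$ ensuring that this cuspidal contribution survives. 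Making this last implication rigorous---controlling the local behaviour of the extension at~$r$ and at the primes dividing~$N$, and checking that the cuspidal image is nonzero in~$A$---is the technical heart of the argument; by contrast the character computations of the preceding paragraph are routine once the local models of semistable reduction are in hand.
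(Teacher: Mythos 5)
Your character analysis for odd $r$ is correct and standard (it is essentially Serre's observation, quoted in Section~2 of this paper from~\cite{serre:propgal}): semistability plus the Tate-curve and connected--\'etale local models force the Jordan--H\"older factors of $A[r]$ to be $\{\mathbf{1},\omega\}$, and the supersingular case is excluded by irreducibility of the inertial representation at~$r$. But you then identify the decisive step --- upgrading ``$\mathbf{1}$ is a Jordan--H\"older factor'' to ``$\mathbf{1}$ is a \emph{sub}-representation,'' i.e.\ producing an actual rational point --- and do not prove it: ``one expects $r$ to divide the order of the cuspidal subgroup,'' ``I would invoke the Eisenstein-ideal machinery,'' ``optimality ensuring that this cuspidal contribution survives'' are statements of intent, not arguments. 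That step \emph{is} the content of the theorem; as the paper notes (following loc.\ cit.), without it one does not know which of $\chi',\chi''$ is trivial, and self-duality cannot decide it, as you correctly observe. So the proposal has a genuine gap exactly at its ``technical heart.''

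The paper's proof is entirely different and essentially a citation: for odd~$r$, Lemma~5.2 of~\cite{vatsal:mult} shows $A$ is ordinary at~$r$, and Proposition~5.3(ii) of~\cite{vatsal:mult} then yields the nontrivial rational $r$-torsion point; the case $r=2$ is the elementary argument you also use. Note moreover that even if you completed your sketch along the lines of this paper's own Theorem~\ref{thm:main} (congruence of~$f$ with an Eisenstein series, then Tang's theorem to intersect $A[\m]$ with the cuspidal subgroup), that route as developed here requires $r \ndiv 6N$, so it would miss the cases $r=3$ and $r \divs N$ (multiplicative reduction at~$r$), both of which Vatsal's theorem covers with no hypothesis on~$r$. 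You would need a separate treatment of those cases --- precisely the sort of analysis (ordinarity at~$r$, including the $r \divs N$ case) that Vatsal's paper carries out --- before the proposal could stand as a proof of the stated theorem.
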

\begin{proof}
If $r$ is odd, then by Lemma~5.2 of~\cite{vatsal:mult}, $A$ has ordinary reduction at~$r$; using
this, Proposition~5.3(ii) of~\cite{vatsal:mult} tells us that 
$A$ has a nontrivial rational~$r$-torsion point.
If $r=2$, then as argued above, the conclusion holds (even without the 
optimality or semistability
hypotheses).
\end{proof}

We remark that the optimality hypothesis cannot be removed in the theorem above: 
the elliptic curve $A = 26b2$
has no nontrivial rational $7$-torsion, while $A[7]$ is reducible; here $A$ is not optimal.

In~\cite{conjs}, the author had  remarked that
it was hoped that Conjecture~\ref{conj1} above
``might be  appropriately generalized
to quotients or subvarieties of arbitrary dimension of~$J_0(N)$ associated to newforms'';
at that time, lacking numerical evidence beyond the elliptic curve case, the author
did not make such a generalization.
Let us revert to the case where $f$ need not have integer Fourier coefficients,
so that $A$ need not be an elliptic curve.
It is not immediately clear to the authors of this article if the techniques of~\cite{vatsal:mult}
generalize to this case, since it is a rather lenghty article, whose main goal
is something else. It would be interesting to try to generalize everything in 
that article beyond the elliptic curve case, which would be a bigger project;
in any case, it is not clear a priori what would generalize.

Let $C$ denote the cuspidal subgroup of~$J_0(N)$. If
$N$ is square-free, then all the points of~$C$ are rational
(e.g., by~\cite[Prop.~2]{ogg}).
Recall that to a maximal ideal~$\m$ of the Hecke algebra, there is associated
a canonical Galois representation~$\rho_{\m}: {\rm Gal}(\overline{\Q}/\Q)
\ra {\rm GL}_2(\T/\m)$
(see~\cite[Prop.~5.1]{ribet:modreps}). 
In this article, we prove:
\begin{thm}\label{thm:main}
Suppose $N$ is square-free and ${\m}$ is a maximal ideal of~$\T$ 
that contains~$I_f$ and has 
residue characteristic~$r$ such that $r \ndiv 6N$.
If either $\rho_{\m}$ or~$A[\m]$ is reducible as a representation over~$\T/\m$, then
$A[\m] \cap C$ is nontrivial, and so\\
(a) $r$ divides $|C|$, \\
(b) $A[\m]$ has a nontrivial rational point, and \\
(c) $A$ has a nontrivial rational $r$-torsion point. \\
In particular, if 
$A$ does not have any nontrivial rational $r$-torsion point
or $r$ does not divide $|C|$, then 
$\rho_{\m}$ is irreducible. 
\end{thm}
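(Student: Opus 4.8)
The plan is to reduce the entire theorem to the single assertion that $A[\m]\cap C\neq 0$. Granting this, parts (a)--(c) follow at once. Indeed, any nonzero $P\in A[\m]\cap C$ is a rational point, since every point of $C$ is rational because $N$ is square-free; this gives~(b). As $r\in\m$ we have $A[\m]\subseteq A[r]$, so $P$ is a nontrivial rational $r$-torsion point of $A$, giving~(c). Finally $P$ spans a nonzero subgroup of $C$ of $r$-power order, so $r\mid|C|$, giving~(a); the concluding ``in particular'' clause is then just the contrapositive. Thus everything comes down to producing a nonzero point of $A[\m]\cap C$ under the reducibility hypothesis.

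I would first reconcile the two reducibility hypotheses, reducing both to reducibility of $\rho_\m$. Since $A\subseteq J_0(N)$, the group $A[\m]$ is a nonzero $(\T/\m)[\mathrm{Gal}(\Qbar/\Q)]$-submodule of $J_0(N)[\m]$. If $\rho_\m$ were irreducible, then multiplicity one for $J_0(N)[\m]$---available here because $N$ is square-free and $r\nmid N$---would give $J_0(N)[\m]\cong\rho_\m$, a two-dimensional irreducible module; the nonzero submodule $A[\m]$ would then equal all of it and in particular be irreducible. Contrapositively, reducibility of $A[\m]$ forces reducibility of $\rho_\m$, so in either case I may assume $\rho_\m$ is reducible and work with that hypothesis alone.

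Next I would determine the semisimplification of $\rho_\m$. As $f$ has weight~$2$ and trivial character, $\det\rho_\m$ is the mod-$r$ cyclotomic character $\chi_{\mathrm{cyc}}$, so I may write $\rho_\m^{\mathrm{ss}}=\chi_1\oplus\chi_2$ with $\chi_1\chi_2=\chi_{\mathrm{cyc}}$. The $\chi_i$ are unramified outside $Nr$; at a prime $p\mid N$ the form is special (since $N$ is square-free), so $\rho_\m|_{D_p}$ is an extension of unramified characters and the diagonal characters $\chi_1,\chi_2$ are themselves unramified at $p$. Hence each $\chi_i$ is unramified outside $r$, so factors through $\mathrm{Gal}(\Q(\bmu_r)/\Q)$ and is a power of $\chi_{\mathrm{cyc}}$. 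At $r$ the representation is finite flat (as $r\nmid N$), which forces the Hodge--Tate weights to be $0$ and $1$; using $r\nmid 6$ to rule out the exotic finite flat possibilities, this pins the pair down to $\{\chi_1,\chi_2\}=\{\mathbf 1,\chi_{\mathrm{cyc}}\}$. In particular the trivial character is a constituent of $\rho_\m$, i.e. $\m$ is Eisenstein: $T_\ell\equiv 1+\ell\pmod\m$ for all $\ell\nmid N$.

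It then remains to place a cuspidal point inside $A$. The Hecke operators act on $C$ through the Eisenstein characters $T_\ell\mapsto 1+\ell$, so the Eisenstein property of $\m$ yields $C[\m]\neq 0$. The crux---and what I expect to be the main obstacle---is to upgrade this to $A[\m]\cap C\neq 0$, that is, to exhibit the cuspidal $\m$-torsion inside the specific isogeny factor $A$ rather than merely inside $J_0(N)$. Here I would exploit $\m\supseteq I_f$, so that $C[\m]\subseteq J_0(N)[I_f]$, together with the explicit description of the rational cuspidal subgroup for square-free level (following Ogg and Ling--Oesterl\'e) to match the Eisenstein line cut out by $\m$ with the $f$-isotypic part and to show that its image under the projection $J_0(N)\to J_0(N)/A$ vanishes, so that it lies in $A$. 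Verifying that the trivial constituent is genuinely supported on the factor $A$ is exactly where the choice of isogeny factor, and the quantitative compatibility of $\m$ with $I_f$, enter, and is the step I anticipate requiring the most care.
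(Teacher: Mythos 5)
Your opening reductions are sound and in fact match the paper: the derivation of (a)--(c) from $A[\m]\cap C\neq 0$ is exactly how the paper concludes, and your passage from reducibility of $A[\m]$ to reducibility of $\rho_\m$ via irreducibility of $J_0(N)[\m]$ is the paper's Proposition~\ref{prop:red} (which uses Ribet's result directly, with no need for multiplicity one). Your identification $\rho_\m^{\rm ss}\isom {\bf 1}\oplus\chi_r$, hence $T_\ell\equiv 1+\ell \bmod \m$ for $\ell\ndiv N$, is the content of Lemma~\ref{lem:mazur0} (the paper quotes Yoo for it). But from that point on there are two genuine gaps. First, your claim that ``the Eisenstein property of $\m$ yields $C[\m]\neq 0$'' does not follow from the $T_\ell$-congruences alone: a maximal ideal of $\T$ is not determined by the $T_\ell$ with $\ell\ndiv N$, and at square-free level there are several Eisenstein ideals, indexed by the choices $U_p\mapsto 1$ or $U_p\mapsto p$ at each $p\divs N$, with correspondingly different pieces $C_E$ of the cuspidal group. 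To know that $\m$ is in the support of~$C$ you must control the $U_p$-eigenvalues of $f$ modulo~$\pp$ and show that $r$ divides the order of the matching piece of $C$ --- but $r\divs |C|$ is conclusion (a), so as stated your argument is circular. Pinning down the bad-prime eigenvalues is precisely the hard part of the paper: $a_p(f)=-w_p$, Yoo's Lemma~\ref{lem:dum} ($p\equiv -1 \bmod \pp$ when $w_p=+1$), Lemma~\ref{lem:r} (the case $r\divs N$), and above all Proposition~\ref{prop:mazur2}, which shows some $w_p=-1$ via an inductive level-lowering argument with mod~$\pp$ forms (``special'' forms descending to level $1$, contradicting Mazur's Lemma II.5.6). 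These yield a congruence $f\equiv E \bmod \pp$ with a suitable Eisenstein series of Proposition~\ref{prop:eis} at \emph{all} indices $n$, not merely $n$ coprime to $N$; nothing in your proposal produces this.

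Second, the step you yourself flag as the crux --- upgrading cuspidal $\m$-torsion in $J_0(N)$ to a point of $A[\m]\cap C$ --- is left without an actual argument, and the mechanism you sketch (show the Eisenstein line dies under $J_0(N)\to J_0(N)/A$ using the explicit cuspidal structure) does not obviously work: a point of $J_0(N)[\m]$ is killed by $I_f$ but need not lie on the connected subvariety $A$, since $J_0(N)[I_f]$ has a finite component group whose order is governed exactly by congruences at Eisenstein primes, i.e.\ by the very prime~$r$ at hand. The paper bridges this gap by a nontrivial external input: having established the full congruence $f\equiv E\bmod\pp$ and that $f$ is \emph{ordinary} at~$\pp$ (a hypothesis you never address), it invokes Tang's theorem \cite[Thm~0.4]{tang:congruences} to conclude $A[\m]\cap C_E\neq 0$. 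So your proposal correctly scaffolds the easy reductions but is missing the two ideas that constitute the actual proof: the Atkin--Lehner sign analysis forcing the congruence at primes dividing~$N$, and the ordinarity-plus-Tang argument placing the cuspidal torsion inside~$A$.
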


The proof of the theorem above is given in Section~\ref{sec:lemmas}.
The idea of the proof is very similar 
to that of the main theorem of~\cite{tor}: we use the hypotheses to 
show that the newform~$f$ 
is congruent to an Eisenstein series~$E$
modulo a prime ideal~$\pp$ in the number field generated by the 
Fourier coefficients of~$f$ that lies over~$r$ 
(the tricky part is to prove the congruences for Fourier coefficients
of indices that are not coprime to~$N$). 
As part of the proof of this congruence,
we show that under the hypotheses of the theorem,
for at least one prime~$p$ that divides~$N$,
the sign of the Atkin-Lehner involution at~$p$ acting on~$f$ is~$-1$, which 
is an interesting result on its own
(see Proposition~\ref{prop:mazur2}; some versions of this result have
already appeared in the literature: see Remark~\ref{rmk2}).
Given the congruence between~$f$ and~$E$, and the fact that
$f$ is ordinary at~$\pp$ (which we show), a result of 
Tang~\cite[Thm~0.4]{tang:congruences}
tells us that 
$A[{\m}]$ has nontrivial
intersection with 
a subgroup of the cuspidal group~$C$ of~$J_0(N)$, giving us the theorem. 

The hypothesis that $\rho_{\m}$ is irreducible is often used in arithmetic geometry
(for example in multiplicity one results, as in~\cite[Prop.~1.3]{congnum}). 
One significance of the theorem above is that
the condition that 
$A$ does not have any nontrivial rational $r$-torsion point
may  be easier to check
theoretically or in numerical examples, 
than whether $\rho_{\m}$ is irreducible. 
Another significance of the theorem stems from the fact that the cuspidal subgroup
can be computed (see, e.g.~\cite{stein:cuspidal}), and the 
structure of~$C$ when $N$ is square-free
is given in~\cite{takagi:cuspidal} (see also~\cite{yoo:rational}; the formulas
are too complicated to mention here). As an 
example, the order of the cuspidal subgroup for $N=14$ is~$6$, so for any
maximal ideal~$\m$ of~$\T$ (of level~$14$) of residue characteristic bigger than~$7$, 
$\rho_{\m}$ is irreducible. 

In the next section, we mention 
some more applications of Theorems~\ref{vatsal} and~\ref{thm:main}, 
including one relevant to the second part of the Birch and Swinnerton-Dyer conjecture.
In Section~\ref{sec:lemmas}, we prove some results regarding the 
Fourier coefficients of~$f$ that are needed to show the congruence
alluded to above and use these
results to prove Theorem~\ref{thm:main}; the reader who is interested
in these results and the proof may go directly to that section.\\

\noindent {\em Acknowledgement:}
We are grateful to E.~Ghate for some discussions regarding this article.

\section{Applications} \label{sec:app}
We continue to use the notation introduced in the previous section.

By a theorem of Mazur~(\cite[Theorem~III.5.1]{mazur:eisenstein}), if $r$ a prime bigger than~$7$,
then $r$ does not divide the order of the torsion subgroup of any elliptic curve over~$\Q$. 
So by Theorem~\ref{vatsal}, we get:
\begin{cor}
Suppose $A$ is an optimal semistable elliptic curve 
and $r$ a prime such that $r> 7$. 
Then $A[r]$ is irreducible. 
\end{cor}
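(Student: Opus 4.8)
The plan is to obtain the corollary as an immediate consequence of the two results recalled above: Vatsal's Theorem~\ref{vatsal} and Mazur's theorem on the torsion of elliptic curves over~$\Q$ (\cite[Theorem~III.5.1]{mazur:eisenstein}). I would argue by contradiction.

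First I would assume that $A[r]$ is reducible. Since $A$ is an optimal semistable elliptic curve and $r$ is a prime, the hypotheses of Theorem~\ref{vatsal} are met exactly, and that theorem then yields a nontrivial rational $r$-torsion point on~$A$.

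Next I would note that the existence of such a point forces $r$ to divide the order of the torsion subgroup of~$A(\Q)$. But Mazur's theorem states that no prime larger than~$7$ divides the order of the torsion subgroup of any elliptic curve over~$\Q$; since $r > 7$ by hypothesis, this is a contradiction, and so $A[r]$ must be irreducible.

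The main point---indeed essentially the only content---is recognizing that the two cited inputs compose cleanly: Theorem~\ref{vatsal} converts the failure of irreducibility into a rational torsion point, and Mazur's bound then excludes such torsion once $r > 7$. There is no genuine obstacle here; one need only check that the optimality and semistability assumptions of Theorem~\ref{vatsal} coincide with those imposed in the corollary, which they do verbatim.
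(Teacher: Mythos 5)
Your proof is correct and follows exactly the paper's intended argument: the corollary is deduced by combining Theorem~\ref{vatsal} (reducibility of $A[r]$ yields a nontrivial rational $r$-torsion point) with Mazur's theorem \cite[Theorem~III.5.1]{mazur:eisenstein} that no prime $r > 7$ divides the order of the torsion subgroup of an elliptic curve over~$\Q$. The paper gives no further details, so your contradiction argument is precisely the same composition of the two results.
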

This result is already known by~\cite[Theorem~4]{mazur:rational}; however,
the proof is different. In this context, we should also point out that
if one drops the hypothesis that $N$ is square-free, then 
there is a finite list of primes~$r$ such that an elliptic 
curve has a rational isogeny of degree~$r$: 
see~\cite[Theorem~1]{mazur:rational}.

\comment{
Reverting back to the general case where $A$ need not be an elliptic curve,
there is no known analog of Mazur's theorem on the possible orders of torsion subgroups
mentioned above. However, Ohta has generalized some of Mazur's techniques
in~\cite{mazur:eisenstein} 
to show that if $N$ is square-free and $r > 3$, then the $r$-power torsion
subgroup of~$J_0(N)(\Q)$ is contained in the cuspidal subgroup~$C$
(see the main theorem of~\cite{ohta:eisenstein}). So we get

\begin{prop} \label{irred}
Suppose $N$ is square-free and $\m$ is a maximal ideal of~$\T/I_f$ of
residue characteristic~$r$ such that $r \ndiv 6N$ and $r \ndiv |C|$.
Let $\m$ denote the inverse image of~${\m'}$ under the map $\T \ra \T/I_f$.
Then $\rho_{\m}$ and~$A[{\m'}]$ are both irreducible as representations over~$\T/\m$. 
\end{prop}

The importance of the proposition above stems from the fact that the cuspidal subgroup
can be computed (see, e.g.~\cite{stein:cuspidal}) and the 
structure of~$C$ when $N$ is square-free
is discussed in~\cite{takagi:cuspidal} (see also~\cite{yoo:rational}). We remark
that the proposition above follows directly from the proof of Theorem~\ref{thm:main}
(without having to use~\cite{ohta:eisenstein}); however, we wanted to
emphasize that any bound on the order of the rational torsion subgroup of~$A$
will have a consequence for the irreducibilty of~$\rho_{\m}$ (when the hypotheses
of the theorem hold).

}

As mentioned in~\cite[p.~307]{serre:propgal}, 
if $A$ is a semistable elliptic curve (it need not be optimal) and $r$ is a prime
such that $A[r]$ is reducible 
as a representation of~${\rm Gal}(\overline{\Q}/\Q)$, then 
this representation can be put in matrix form as
$$ 
\begin{bmatrix}
\chi' & * \\
0 & \chi'' 
\end{bmatrix},
$$
where $\chi'$ and $\chi''$ are characters of~${\rm Gal}(\overline{\Q}/\Q)$
such that one of them is the identity character and the other is the \mbox{$\bmod\ r$} cyclotomic character. If $\chi'$ is the identity
character, then $A$ has a nontrivial rational point of order~$r$; however, 
one does not know if $\chi'$ is the identity
character (as per loc. cit.). 
Theorem~\ref{vatsal} above implies that under the additional hypothesis that 
$A$ is optimal, one may take $\chi'$ to be the identity character.

In the rest of this section, we discuss an application of 
Theorems~\ref{vatsal} and~\ref{thm:main}
to the second part of 
the Birch and Swinnerton-Dyer (BSD)
conjecture for~$A$. 
Let $L_A(s)$ denote
the $L$-function of~$A$. 
Let $K_A$ denote the coefficient of the leading term of the Taylor
series expansion of~$L_A(s)$ at $s=1$, and let $R_A$ denote the 
regulator of~$A$.
Let $\Omega_A$ denote the volume of~$A({\R})$ calculated using
a generator
of the group of invariant differentials on the N\'{e}ron 
model of~$A$. 
Let $\Sha_{A}$ denote the Shafarevich-Tate group of~$A$, which
we assume is finite. 
If $p$ is a prime that divides~$N$, then
let $c_p(A)$ denote the order of the arithmetic component group
of~$A$ at~$p$ (also called the Tamagawa number of~$A$ at~$p$).
Then the {\it second part of the BSD~conjecture}
asserts the formula: 
\begin{eqnarray} \label{bsd}
\frac{K_A}{\Omega_A \cdot R_A}
\stackrel{?}{=} 
\frac{\Mid \Sha_{A} \miD \cdot \prod_p c_p(A)}{ \Mid A({\Q})_{\tor} \miD 
\cdot \Mid A^\vee({\Q})_{\tor} \miD }\ .
\end{eqnarray}
Based on numerical evidence, the first author has conjectured:
\begin{conj}\cite[Conjecture~2.4]{conjs} \label{conj2} 
Suppose that $A$ is an elliptic curve (recall that it is optimal by assumption).
If an odd prime~$\ell$ divides $c_p(A)$ for some prime~$p$ that divides~$N$, 
then
either $\ell$ divides $| A(\Q)_{\tor}|$
or the newform~$f$ 
is congruent to a newform of level dividing~$N/p$
(for all Fourier coefficients whose indices are coprime to~$N \ell$)
modulo a prime ideal over~$\ell$ in a number field containing
the Fourier coefficients of both newforms.
\end{conj}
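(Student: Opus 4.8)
The plan is to prove Conjecture~\ref{conj2} by combining the dichotomy supplied by Theorem~\ref{vatsal} with Ribet's level-lowering theorem~\cite{ribet:modreps}, the link between the two being the classical fact that, at a prime of multiplicative reduction, divisibility of the Tamagawa number by~$\ell$ is exactly the condition for the mod-$\ell$ representation to be unramified there.

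First I would fix the reduction type at~$p$. Because $A$ is semistable it has multiplicative reduction at~$p \divs N$; if that reduction were nonsplit then $c_p(A) \in \{1,2\}$, which the odd prime~$\ell$ cannot divide, so the reduction must be split multiplicative. In that case $c_p(A) = \ord_p(\Delta_A) = -\ord_p(j_A) = \ord_p(q)$, where $q$ is the Tate parameter, so the hypothesis $\ell \divs c_p(A)$ reads $\ell \divs \ord_p(q)$. Next I would analyze the representation $\overline{\rho} = \overline{\rho}_{A,\ell}$ of~${\rm Gal}(\Qbar/\Q)$ on $A[\ell]$ at a decomposition group over~$p$ using the Tate uniformization $A(\Qbar_p) \isom \Qbar_p^{\times}/q^{\Z}$: the group $A[\ell]$ is generated by a primitive $\ell$-th root of unity $\zeta_\ell$ and by $q^{1/\ell}$, and when $\ell \neq p$ both $\zeta_\ell$ and (since $\ell \divs \ord_p(q)$) $q^{1/\ell}$ lie in $\Q_p^{\rm unr}$, so $\overline{\rho}$ is unramified at~$p$.

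With unramifiedness established, everything hinges on the reducibility of~$\overline{\rho}$. If $A[\ell]$ is reducible then, $A$ being optimal and semistable, Theorem~\ref{vatsal} yields a nontrivial rational $\ell$-torsion point, whence $\ell \divs |A(\Q)_{\tor}|$ --- the first alternative. If instead $A[\ell]$ is irreducible, then $\overline{\rho}$ is an irreducible mod-$\ell$ representation attached to the newform~$f$ of level~$N$ that is unramified at the prime $p \divs N$, so Ribet's level-lowering theorem produces a newform~$g$ of level dividing~$N/p$ with $\overline{\rho}_{g,\ell} \isom \overline{\rho}$; comparing traces of Frobenius then gives $a_n(f) \equiv a_n(g)$ modulo a prime over~$\ell$ for all $n$ coprime to~$N\ell$, which is the second alternative.

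The step I expect to be the main obstacle is the borderline case $\ell = p$ excluded above: when $\ell = p$ the Tate-curve argument no longer shows that $\overline{\rho}$ is unramified, and one must instead prove that $A[\ell]$ is finite flat at~$p$ and invoke the level-lowering (Mazur's principle) results in that setting, possibly under an auxiliary hypothesis such as $\ell \ndiv N$. Once $\overline{\rho}$ is known to be unramified (respectively finite flat) at~$p$, the two horns of the conclusion drop out immediately. I would close by recording why this bears on the second part of the BSD conjecture~\eqref{bsd}: the factor $\prod_p c_p(A)$ sits in the numerator of the right-hand side while $|A(\Q)_{\tor}|$ sits in the denominator, so the conjecture asserts that any odd prime dividing some~$c_p(A)$ but not the torsion is forced to come from a congruence to lower level, i.e. from an Eisenstein-type phenomenon.
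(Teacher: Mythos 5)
The statement you set out to prove is presented in the paper as a \emph{conjecture} (Conjecture~\ref{conj2}, quoted from~\cite{conjs}): the paper offers no proof of it, only the partial results Proposition~\ref{mild} and Proposition~\ref{prop0}, which carry extra hypotheses on~$\ell$. Your main line of argument is in fact the same one underlying those partial results: oddness of~$\ell$ forces split multiplicative reduction at~$p$, so $\ell \divs c_p(A) = \ord_p(q)$; the Tate parametrization then shows $\overline{\rho}_{A,\ell}$ is unramified at~$p$ when $\ell \neq p$; and the dichotomy is settled by Theorem~\ref{vatsal} in the reducible case and by Ribet's level-lowering theorem in the irreducible case (this is essentially the proof of Proposition~2.3 of~\cite{conjs}, which Proposition~\ref{mild} adapts, combined with Theorem~\ref{vatsal} as in Proposition~\ref{prop0}(b)). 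Within its range of validity --- namely $\ell \ndiv N$ --- your argument is correct, including the observation that nonsplit reduction gives $c_p(A) \in \{1,2\}$ and the passage from an isomorphism of mod-$\ell$ representations to the congruence $a_n(f) \equiv a_n(g)$ for $n$ coprime to~$N\ell$. (You do silently use semistability of~$A$, i.e.\ that $N$ is square-free, both for multiplicative reduction and to invoke Theorem~\ref{vatsal}; that is the ambient hypothesis of~\cite{conjs}.)

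The genuine gap is the one you flag yourself, and it is exactly the gap that keeps the statement a conjecture rather than a theorem: the conjecture permits $\ell \divs N$. When $\ell = p$, the Tate-curve computation no longer gives unramifiedness, and one must instead prove finite-flatness of $A[\ell]$ at~$p=\ell$ and invoke a level-lowering statement at the residue characteristic; and even when $\ell \neq p$ but $\ell \divs N$, Ribet's theorem in the form you cite does not apply as stated, since his hypotheses include $\ell \ndiv N$. Patching this with ``an auxiliary hypothesis such as $\ell \ndiv N$,'' as you propose, does not prove Conjecture~\ref{conj2}; it reproduces (the elliptic-curve case of) Proposition~\ref{prop0}, whose hypothesis ``either $\ell \nmid N$ or $\ell \nmid (r-1)$ for all primes $r \divs N$'' encodes precisely the conditions under which the known level-lowering machinery (Mazur's principle and its refinements) applies. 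So your proposal should be read as a correct reconstruction of the paper's partial result and of the heuristic behind the conjecture --- including the BSD cancellation picture in~(\ref{bsd}) --- but not as a proof of the conjecture itself; the cases with $\ell \divs N$ remain open.
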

This conjecture indicates some (conjectural) cancellation between the numerator
and denomintor of the right side of the BSD formula~(\ref{bsd}) above
(for more on such cancellations, see~\cite{conjs}).
Towards the conjecture above, we have the following mild generalization 
of~\cite[Proposition~2.3]{conjs}:
\begin{prop} \label{mild}
Let $\ell$ be an odd prime such that 
either $\ell \nmid N$ or for all primes $r$ that divide~$N$,
$\ell \nmid (r-1)$. 
If $\ell$ divides the order of the geometric component
group of~$A$ at~$p$ for some prime~$p || N$,
then either for some maximal ideal~$\m$ of~$\T$ of residue
characteristic~$\ell$ and containing~$I_f$, the representation
$\rho_{\m}$ is reducible, or the newform~$f$ 
is congruent to a newform of level dividing~$N/p$
(for all Fourier coefficients whose indices are coprime to~$N \ell$)
modulo a prime ideal over~$\ell$ in a number field containing
the Fourier coefficients of both newforms.
\end{prop}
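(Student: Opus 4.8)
The plan is to relate the $\ell$-divisibility of the geometric component group of $A$ at $p$ to a congruence between $f$ and a form of lower level, using the theory of the Eisenstein ideal and Ribet's level-lowering machinery together with Grothendieck's description of component groups in the semistable case. First I would invoke the fact that $N$ is square-free, so $A$ has purely toric (multiplicative) reduction at $p$; by Grothendieck's theory, the geometric component group $\Phi_{A,p}$ is then described in terms of the character group of the toric part, and its order is controlled by the monodromy pairing. Since $A$ is the abelian variety attached to $f$, the action of $\T$ descends to $\Phi_{A,p}$, and I would examine the $\ell$-torsion $\Phi_{A,p}[\ell]$ as a $\T$-module. The hypothesis that $\ell$ divides $|\Phi_{A,p}|$ forces $\Phi_{A,p}[\ell] \neq 0$, so there is some maximal ideal $\m$ of $\T$ of residue characteristic $\ell$ (necessarily containing $I_f$, by passing to a constituent supported at $f$) in whose support this $\ell$-torsion lies.

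Next I would split into the two alternatives of the conclusion according to whether $\rho_{\m}$ is reducible or irreducible. If $\rho_{\m}$ is irreducible, then I would apply a multiplicity-one result to conclude that $J_0(N)[\m]$ is two-dimensional over $\T/\m$, and hence Ribet's methods apply: the nonvanishing of $\Phi_{A,p}[\m]$ combined with irreducibility of $\rho_{\m}$ produces a congruence modulo a prime over $\ell$ between $f$ and a newform of level dividing $N/p$, where the $U_p$-eigenvalue changes appropriately under level-raising/lowering at $p$. This is precisely the classical statement that a reducible-free mod-$\ell$ representation ramified only in the toric part at $p$ arises from a form of level prime to $p$. The point of the hypothesis on $\ell$ — namely $\ell \nmid N$ or $\ell \nmid (r-1)$ for all $r \divs N$ — is to rule out the degenerate cases where the component group contributes via the cyclotomic character, i.e.\ to guarantee that the relevant congruence is genuinely with a \emph{cusp} form rather than an Eisenstein contribution forced by the component-group structure at primes $r \neq p$.

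The main obstacle will be making the passage from "$\ell \divs |\Phi_{A,p}|$'' to a statement about a specific $\m$ with $\Phi_{A,p}[\m] \neq 0$ rigorous, and then ensuring that in the irreducible case the resulting congruence is with a newform of level \emph{dividing $N/p$} rather than merely of some level dividing $N$. Controlling the level requires a careful analysis of the $U_p$-action on $\Phi_{A,p}$: on the component group of a semistable abelian variety with toric reduction at $p$, $U_p$ acts as $\pm 1$ (the Atkin--Lehner sign), and this is what pins the old/new behavior at $p$ and forces the companion form to have level prime to $p$. I would carry this out by combining the exact sequence relating $\Phi_{A,p}$ to the component group of $J_0(N)$ at $p$ with Ribet's analysis of the latter via the supersingular module and the Eisenstein ideal, tracking the $\T$-action throughout. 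The number-theoretic hypothesis on $\ell$ enters at exactly this stage to eliminate spurious contributions from the residual characters, so verifying that it suffices is the delicate point.
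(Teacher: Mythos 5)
Your skeleton --- pass from $\ell \mid |\Phi_{A,p}|$ to a maximal ideal $\m$ of residue characteristic~$\ell$ containing~$I_f$ in the support of the component group, then split on whether $\rho_{\m}$ is reducible, and level-lower at~$p$ in the irreducible case --- is indeed the shape of the argument the paper relies on (the paper's proof is a one-line adaptation of the proof of Proposition~2.3 of~\cite{conjs}, replacing $E$ by~$A$ and $E[\ell]$ by~$\rho_{\m}$). But there are genuine gaps. The step you describe as ``precisely the classical statement'' is not a theorem in the form you state it: level-lowering (Ribet's theorem, or Mazur's principle) takes as its hypothesis that $\rho_{\m}$ is \emph{unramified at~$p$} (or finite at~$p$ when $p = \ell$), not that $\Phi_{A,p}[\m] \neq 0$. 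The entire technical content of the proof is the bridge from component-group divisibility to this ramification statement --- for an elliptic curve via the Tate parametrization ($\ell \divs \ord_p(q)$ forces $E[\ell]$ unramified at~$p$), and for general~$A$ via Grothendieck's monodromy pairing on the character group, localized at~$\m$. You explicitly defer exactly this point (``the main obstacle will be making the passage \dots rigorous''), so what you have is a plan, not a proof. A smaller slip: you invoke ``$N$ is square-free,'' which is not a hypothesis of this proposition; purely toric reduction of~$A$ at~$p$ already follows from $p || N$ together with $f$ being new of level~$N$.

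More seriously, your reading of the hypothesis on~$\ell$ is wrong, and the argument would fail where that hypothesis is actually needed. You say it serves ``to guarantee that the relevant congruence is genuinely with a cusp form rather than an Eisenstein contribution''; but the reducible (Eisenstein) situation is explicitly one of the two permitted conclusions of the proposition, so nothing of that kind needs to be ruled out. The hypothesis is instead exactly what legitimizes the level-lowering step: when $\ell \nmid N$ one may apply Ribet's theorem removing a prime $p || N$ at which $\rho_{\m}$ is unramified, whereas when $\ell \divs N$ that theorem is unavailable and one must fall back on Mazur's principle, whose hypothesis is precisely $p \not\equiv 1 \pmod{\ell}$, i.e.\ $\ell \nmid (p-1)$ --- which is what the condition ``for all primes $r \divs N$, $\ell \nmid (r-1)$'' supplies. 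With your (mis)placement of the hypothesis, the case $\ell \divs N$ is left uncovered: you would have no level-lowering theorem to quote there, and the deduction of a companion newform of level dividing~$N/p$ would collapse. To repair the proposal you would need to (i)~prove the unramifiedness of $\rho_{\m}$ at~$p$ from $\ell \divs |\Phi_{A,p}|$ via the monodromy pairing, and (ii)~run the two-case level-lowering argument with the hypothesis on~$\ell$ deployed as above.
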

\begin{proof}
The proof of Proposition~2.3 in~\cite{conjs} works with the following changes:
replace $E$ by~$A$ and $E[\ell]$ by~$\rho_{\m}$.
\end{proof}
Note that if $p$ is a prime that divides~$N$, and if a prime~$\ell$
divides~$c_p(A)$, then $\ell$ also divides
the order of the geometric component
group of~$A$ at~$p$.

In view of Theorems~\ref{vatsal} and~\ref{thm:main}, from the proposition above, we get:
\begin{prop} \label{prop0}
Let $\ell$ be a prime. Suppose $N$ is square-free and one of the following is true:\\
(a) $\ell \nmid 6N$. \\
(b)  $A$ is an ellitpic curve and 
either $\ell \nmid N$ or for all primes $r$ that divide~$N$, $\ell \nmid (r-1)$. \\
If $\ell$ divides the order of the geometric component
group of~$A$ at~$p$ for some prime~$p \divs N$,
then either $\ell$ divides $| A(\Q)_{\tor}|$
or the newform~$f$ 
is congruent to a newform of level dividing~$N/p$
(for all Fourier coefficients whose indices are coprime to~$N \ell$)
modulo a prime ideal over~$\ell$ in a number field containing
the Fourier coefficients of both newforms.
\end{prop}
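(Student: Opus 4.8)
The plan is to obtain Proposition~\ref{prop0} as a formal consequence of Proposition~\ref{mild}, whose conclusion already contains the desired congruence alternative verbatim; the only work is to upgrade its other alternative, reducibility of some~$\rho_{\m}$, into the statement that $\ell$ divides~$|A(\Q)_{\tor}|$. For that upgrade I would invoke Theorem~\ref{thm:main} under hypothesis~(a) and Theorem~\ref{vatsal} under hypothesis~(b), the split reflecting exactly which torsion-producing input each theorem accepts.

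First I would check that Proposition~\ref{mild} applies. Since $N$ is square-free, the conditions $p \mid N$ and $p \| N$ coincide, so the hypothesis on the geometric component group transfers directly. Under~(a), $\ell \nmid 6N$ forces $\ell$ to be odd and $\ell \nmid N$, so the first clause of Proposition~\ref{mild} holds; under~(b) the assumed arithmetic condition on~$\ell$ is precisely the one appearing there (for odd~$\ell$). Applying Proposition~\ref{mild} then splits the argument: either $f$ is congruent to a newform of level dividing~$N/p$ in the stated sense, in which case we are immediately in the second alternative of the conclusion, or there is a maximal ideal~$\m$ of~$\T$ of residue characteristic~$\ell$ containing~$I_f$ with $\rho_{\m}$ reducible.

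It then remains to treat this reducible branch under each hypothesis. Under~(a), the condition $\ell \nmid 6N$ is exactly what Theorem~\ref{thm:main} requires, so applying it with the input ``$\rho_{\m}$ reducible'' gives, by part~(c), a nontrivial rational $\ell$-torsion point of~$A$, hence $\ell \mid |A(\Q)_{\tor}|$. Under~(b), $A$ is an elliptic curve, so $\T/I_f \cong \Z$, the ideal~$\m$ corresponds to~$(\ell)$, and $A[\m] = A[\ell]$; moreover $\rho_{\m}$ is the semisimplification of~$A[\ell]$, so reducibility of~$\rho_{\m}$ is equivalent to reducibility of~$A[\ell]$ (a two-dimensional representation is reducible if and only if its semisimplification is). Since $N$ is square-free, $A$ is an optimal semistable elliptic curve, so Theorem~\ref{vatsal} applies for any such~$\ell$ (in particular for $\ell \in \{2,3\}$ or $\ell \mid N$, where Theorem~\ref{thm:main} is unavailable) and again yields a nontrivial rational $\ell$-torsion point, whence $\ell \mid |A(\Q)_{\tor}|$.

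I expect the only real friction to be the bookkeeping in case~(b): matching the Hecke-theoretic reducibility of~$\rho_{\m}$ produced by Proposition~\ref{mild} to the input ``$A[\ell]$ reducible'' demanded by Theorem~\ref{vatsal}, which is where the semisimplification remark enters, and disposing of the instance $\ell = 2$, where Proposition~\ref{mild} as stated does not directly apply. For the BSD application motivating the result only odd~$\ell$ is needed, so this last point is minor; I would handle it by a direct argument, noting that a reducible~$A[2]$ always carries a rational point of order~$2$ (as observed in the introduction), so the torsion alternative is automatic whenever the reducible branch occurs with $\ell = 2$.
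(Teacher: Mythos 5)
Your proposal is correct and follows exactly the paper's own (implicit) derivation: the paper presents Proposition~\ref{prop0} as an immediate consequence of Proposition~\ref{mild} combined with Theorem~\ref{thm:main} in case~(a) and Theorem~\ref{vatsal} in case~(b), which is precisely your decomposition. Your added bookkeeping --- identifying $\rho_{\m}$ with the semisimplification of~$A[\ell]$ when $A$ is an elliptic curve so that reducibility transfers, and flagging that $\ell=2$ under~(b) falls outside Proposition~\ref{mild} as stated --- is in fact more careful than the paper, which leaves both points unaddressed.
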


The proposition above is a better result towards Conjecture~\ref{conj2}
than Proposition~\ref{mild}, at least 
when $\ell \ndiv 3N$.
Since, as mentioned earlier, if $r$ a prime bigger than~$7$ and $A$ is an elliptic curve,
then $r$ cannot not divide~$| A(\Q)_{\tor}|$, we get from the proposition above:
\begin{cor} Suppose $N$ is square-free and $A$ is an elliptic curve (recall 
that it is optimal by assumption).
Let $\ell$ be a prime such that $\ell > 7$. 
If $\ell$ divides the order of the geometric component
group of~$A$ at~$p$ for some prime~$p \divs N$, then
the newform~$f$ 
is congruent to a newform of level dividing~$N/p$
(for all Fourier coefficients whose indices are coprime to~$N \ell$)
modulo a prime ideal over~$\ell$ in a number field containing
the Fourier coefficients of both newforms.
\end{cor}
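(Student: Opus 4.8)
The plan is to obtain this corollary as an immediate consequence of Proposition~\ref{prop0}, the only additional input being Mazur's bound on rational torsion. First I would confirm that Proposition~\ref{prop0} applies. We are already given that $N$ is square-free, that $A$ is an elliptic curve, and that the prime $\ell > 7$ divides the order of the geometric component group of~$A$ at some $p \divs N$; what remains is to verify hypothesis~(a) or~(b) of that proposition. When $\ell \ndiv N$, the inequality $\ell > 7$ forces $\ell \ndiv 6$, hence $\ell \ndiv 6N$, and hypothesis~(a) holds. When $\ell \divs N$, hypothesis~(a) fails and I would instead invoke~(b), which for an elliptic curve~$A$ reduces to knowing that $\ell \ndiv (r-1)$ for every prime $r \divs N$.

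With applicability in hand, the second step is simply to quote Proposition~\ref{prop0}, which yields the dichotomy that either $\ell \divs |A(\Q)_{\tor}|$, or $f$ is congruent, in all Fourier coefficients of index coprime to $N\ell$, to a newform of level dividing $N/p$ modulo a prime over~$\ell$. The third step eliminates the first branch: since $A$ is an elliptic curve over~$\Q$ and $\ell > 7$, Mazur's theorem \cite[Theorem~III.5.1]{mazur:eisenstein} (recalled at the start of this section) gives $\ell \ndiv |A(\Q)_{\tor}|$, so the torsion possibility cannot occur and the congruence assertion is forced, which is exactly the claim.

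All of the genuine mathematics sits inside Proposition~\ref{prop0}, and behind it Theorems~\ref{vatsal} and~\ref{thm:main}; the corollary itself is a two-step deduction of the form ``apply the proposition, then rule out torsion.'' Accordingly, the main obstacle is not the elimination argument but rather the verification that Proposition~\ref{prop0} is genuinely in force in the regime $\ell \divs N$, where hypothesis~(a) is unavailable: there one must secure hypothesis~(b) by excluding a prime $r \divs N$ with $r \equiv 1 \pmod{\ell}$, or else restrict attention to the principal case $\ell \ndiv N$. Once this point is settled, nothing further is required and the proof is complete.
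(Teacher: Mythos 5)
Your proposal is correct and is essentially the paper's own (implicit) argument: the corollary is presented as an immediate consequence of Proposition~\ref{prop0} combined with Mazur's theorem that $\ell > 7$ forces $\ell \nmid |A(\Q)_{\tor}|$, which is exactly your two-step ``apply the proposition, then rule out torsion'' deduction. The caveat you flag in the regime $\ell \divs N$ --- where hypothesis~(a) of Proposition~\ref{prop0} fails and hypothesis~(b) is not automatic unless no prime $r \divs N$ satisfies $r \equiv 1 \bmod \ell$ --- is a genuine looseness in the paper's own statement (which passes over this case silently), not a defect of your argument.
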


Finally, from Theorem~\ref{thm:main} and Proposition~\ref{prop0},  we have:
\begin{cor}
Suppose $N$ is square-free.
Let $\ell$ be a prime such that $\ell \nmid 6N$ and
$\ell$
does not divide~$|C|$.
If $\ell$ divides the order of the geometric component
group of~$A$ at~$p$ for some prime~$p \divs N$,
then 
the newform~$f$ 
is congruent to a newform of level dividing~$N/p$
(for all Fourier coefficients whose indices are coprime to~$N \ell$)
modulo a prime ideal over~$\ell$ in a number field containing
the Fourier coefficients of both newforms.
\end{cor}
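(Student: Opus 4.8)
The plan is to obtain the corollary by feeding the hypothesis $\ell \nmid |C|$ into the dichotomy already supplied by Proposition~\ref{prop0}. Since $\ell \nmid 6N$, alternative~(a) of Proposition~\ref{prop0} applies, and the hypothesis that $\ell$ divides the order of the geometric component group of~$A$ at some~$p \divs N$ then yields that \emph{either} $\ell$ divides $|A(\Q)_{\tor}|$ \emph{or} $f$ is congruent, in the asserted sense, to a newform of level dividing~$N/p$. The desired conclusion is precisely the second alternative, so the entire task reduces to showing that the first alternative is incompatible with the standing hypothesis $\ell \nmid |C|$.

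To rule out $\ell \divs |A(\Q)_{\tor}|$, I would argue by contradiction and apply Theorem~\ref{thm:main}. Assume $A$ has a nontrivial rational point~$P$ of order~$\ell$. The Hecke operators are defined over~$\Q$ and $I_f$ annihilates~$A$, so $\T P$ is a nonzero $\T/I_f$-submodule of $A[\ell]$ all of whose points are $\Q$-rational, hence fixed by ${\rm Gal}(\overline{\Q}/\Q)$. Decomposing $A[\ell]$ along the splitting of the Artinian ring $\T/\ell\T$ into its local factors, I would select a maximal ideal~$\m$ of residue characteristic~$\ell$, necessarily containing~$I_f$, whose associated component of $\T P$ is nonzero; taking the socle of this local module produces a nonzero Galois-fixed vector inside $A[\m]$. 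As $A[\m]$ is two-dimensional over $\T/\m$, such a fixed line is a proper nonzero subrepresentation, so $A[\m]$ is reducible over~$\T/\m$. Because $\m \supseteq I_f$ and $\ell \nmid 6N$, Theorem~\ref{thm:main} now forces $\ell \divs |C|$, contradicting the hypothesis $\ell \nmid |C|$. Hence $\ell \nmid |A(\Q)_{\tor}|$, and the congruence alternative of Proposition~\ref{prop0} must hold.

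I expect the only subtle point to be the passage from a rational $\ell$-torsion point on~$A$ to the reducibility of a single $A[\m]$: concretely, the assertion that the Galois-fixed vector we extract lies in an $A[\m]$ that is genuinely two-dimensional over~$\T/\m$, so that the fixed line is \emph{proper}. This in turn rests on the Tate module of~$A$ being free of rank two over $\T/I_f$ after localizing at~$\m$, a freeness statement that is standard for these modular abelian varieties at primes $\ell$ with $\ell \nmid 6N$ and that is already implicit in the proof of Theorem~\ref{thm:main}. Everything else in the argument is a purely formal combination of Proposition~\ref{prop0} and Theorem~\ref{thm:main}, with the module-theoretic bookkeeping of the previous paragraph being routine.
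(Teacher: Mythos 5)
Your overall reduction is sound, and in fact closer to the paper than you might expect: the paper derives this corollary in one line from Theorem~\ref{thm:main} and Proposition~\ref{prop0}, and the cleanest reading of that derivation is to go back one step to Proposition~\ref{mild}, whose dichotomy is ``some $\rho_{\m}$ with $\m \supseteq I_f$ of residue characteristic~$\ell$ is reducible, or the congruence holds,'' and then to kill the first alternative directly with the ``in particular'' clause of Theorem~\ref{thm:main}: since $N$ is square-free, $\ell \nmid 6N$, and $\ell \nmid |C|$, every such $\rho_{\m}$ is irreducible, so the congruence alternative must hold. You instead took the dichotomy in the form stated in Proposition~\ref{prop0} (whose first alternative has already been converted to ``$\ell$ divides $|A(\Q)_{\tor}|$'') and therefore had to prove a \emph{converse} step, from a rational $\ell$-torsion point back to reducibility of some $A[\m]$ — a step the paper never needs.

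That converse step is where your proposal has a genuine gap. The extraction of a nonzero Galois-fixed vector in $A[\m]$ for some $\m \supseteq (I_f,\ell)$ is fine (a nonzero module over an Artinian ring has nonzero $\m$-torsion for some $\m$ in its support). But your conclusion that this fixed line is a \emph{proper} subrepresentation rests on $A[\m]$ being two-dimensional over $\T/\m$, which you assert via freeness of the $\m$-adically localized Tate module, calling it ``standard'' and ``implicit in the proof of Theorem~\ref{thm:main}.'' It is neither: multiplicity-one statements of this kind are proved under the hypothesis that $\rho_{\m}$ is \emph{irreducible} — the paper itself makes exactly this point when citing~\cite[Prop.~1.3]{congnum} — and they can fail precisely at Eisenstein (reducible) maximal ideals, which is the situation you are manufacturing. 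If $\dim_{\T/\m} A[\m] = 1$, your fixed line is all of $A[\m]$, which is then irreducible, and Theorem~\ref{thm:main} gives you nothing. The repair does not need multiplicity one: from the Eichler--Shimura relation, a nonzero rational vector $v \in A[\m]$ satisfies $(\Frob_q^2 - T_q \Frob_q + q)v = 0$ with $\Frob_q v = v$ for every prime $q \nmid \ell N$, forcing $T_q \equiv q + 1 \bmod \m$; since $\rho_{\m}$ is by construction semisimple and two-dimensional with ${\rm tr}\, \rho_{\m}(\Frob_q) = T_q$ and determinant the \mbox{$\bmod\ \ell$} cyclotomic character, Chebotarev and Brauer--Nesbitt give $\rho_{\m} \isom {\bf 1} \oplus \chi_\ell$, hence $\rho_{\m}$ is reducible, and Theorem~\ref{thm:main} then yields $\ell \divs |C|$, the desired contradiction. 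With that substitution your argument is correct, though the route through Proposition~\ref{mild} remains shorter.
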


\section{Some results on Fourier coefficients and
proof of the main theorem} \label{sec:lemmas}

We continue to use the notation introduced in Section~\ref{intro}.
Let $a_n = a_n(f)$ denote the $n$-th Fourier coefficient of~$f$.
Let $K_f$ denote the number field generated by the Fourier coefficients of~$f$.
We have a ring homomorphism $\T \ra K_f$ that takes $T_n$ to $a_n(f)$. Its
image is an order in~$K_f$, hence it is contained in the 
ring of integers of~$K_f$, which we denote by~$\Of$. 
We thus have an injection $\phi: \T_f := \T/I_f \ra \Of$. 
Recall that  $\m$ is a maximal ideal of~$\T$ that contains~$I_f$ and 
$r$ is the characteristic of~$\T/\m$. Let ${\m'}$ denote
the maximal ideal of~$\T_f$ corresponding to~$\m$.
Since $\Of$ is an integral extension of~$\T_f$, there
is a prime ideal~$\pp$ of~$\Of$ that lies over~${\m'}$, i.e.,
$\T_f \cap \phi^{-1}(\pp) = {\m'}$. In particular, $\phi({\m'}) \subseteq \pp$.
Since $\Of$ is Dedekind, $\pp$ is in fact a maximal ideal, but that will
not be needed in the sequel.
Recall that $\rho_{\m}$ 
denotes the canonical representation associated to~$\m$ (see~\cite[Prop.~5.1]{ribet:modreps}). 
Suppose that $\rho_{\m}$ is reducible.
We do not yet assume the hypotheses in our main theorem that 
$N$ is square-free or $r \ndiv 6N$. 
If $p$ is a prime that divides~$N$, then
let $w_p$ denote the sign of the Atkin-Lehner involution~$W_p$
acting on~$f$.

The following lemma is perhaps well known.
\begin{lem} \label{lem:mazur0}
For all primes $\ell \ndiv N$, we have
$a_\ell(f) \equiv 1 + \ell \bmod \pp$ and 
for all primes $p \divs N$, we have $a_p(f) = - w_p$.
\end{lem}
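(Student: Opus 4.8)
The plan is to use the standard properties of $\rho_{\m}$ recorded in~\cite[Prop.~5.1]{ribet:modreps}: it is semisimple, unramified outside~$Nr$, its determinant is the \mbox{mod-$r$} cyclotomic character~$\bar\chi$, and ${\rm tr}\,\rho_{\m}(\Frob_\ell)\equiv T_\ell$ and $\det\rho_{\m}(\Frob_\ell)\equiv \ell\pmod{\m}$ for every prime $\ell\ndiv Nr$. Since $\rho_{\m}$ is semisimple and, by hypothesis, reducible, I would begin by writing $\rho_{\m}\isom\chi_1\oplus\chi_2$ for characters $\chi_1,\chi_2\colon{\rm Gal}(\Qbar/\Q)\to(\T/\m)^\times$ that are unramified outside~$Nr$ and satisfy $\chi_1\chi_2=\bar\chi$. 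Everything then reduces to identifying the unordered pair $\{\chi_1,\chi_2\}$ with $\{1,\bar\chi\}$.

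The core step is this identification. First I would bound the ramification of the~$\chi_i$ away from~$r$: for a prime $p\divs N$ with $p\neq r$ and $p\| N$, the local representation of~$f$ at~$p$ is special, so its semisimplification is unramified, whence each $\chi_i$ is unramified at~$p$. Thus $\chi_1$ and~$\chi_2$ are unramified outside~$r$, and since the maximal abelian extension of~$\Q$ unramified outside~$r$ and~$\infty$ lies in~$\Q(\zeta_{r^\infty})$ --- whose pro-$r$ inertia is killed by any character valued in a field of characteristic~$r$ --- each $\chi_i$ is a power of~$\bar\chi$, say $\chi_i=\bar\chi^{a_i}$ with $a_1+a_2\equiv 1\pmod{r-1}$. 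To force $\{a_1,a_2\}=\{0,1\}$ I would invoke the local structure at~$r$: as $f$ has weight~$2$ and $r\ndiv N$, $\rho_{\m}$ is finite flat at~$r$, so the restriction of each $\chi_i$ to inertia at~$r$ lies in $\{1,\bar\chi|_{I_r}\}$; because $\bar\chi|_{I_r}$ is the fundamental tame character of order exactly $r-1$, this forces $a_i\in\{0,1\}$ modulo $r-1$, and together with $a_1+a_2\equiv1$ and $r$ odd it yields $\{\chi_1,\chi_2\}=\{1,\bar\chi\}$. I expect this analysis at~$r$ to be the main obstacle, as it is precisely where the weight-$2$ finite-flatness and the auxiliary hypotheses ($r$ odd, $r\ndiv N$) are genuinely used; the case $p^2\divs N$, should it occur, would need separate local input.

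Granting the identification, the congruence for $\ell\ndiv N$ is immediate. For $\ell\ndiv Nr$ I would compute $T_\ell\equiv{\rm tr}\,\rho_{\m}(\Frob_\ell)=\chi_1(\Frob_\ell)+\chi_2(\Frob_\ell)=1+\ell\pmod{\m}$; applying the injection~$\phi$ and using $\phi(\m')\subseteq\pp$ then gives $a_\ell(f)\equiv 1+\ell\pmod{\pp}$. The one remaining case $\ell=r$ (which arises only when $r\ndiv N$) I would settle from the finite-flat, ordinary structure at~$r$: the trivial summand provides an unramified quotient on which $\Frob_r$ acts trivially, so $a_r(f)\equiv1\pmod{\pp}$, and this agrees with $1+r\equiv1\pmod{\pp}$ since $r\in\pp$.

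Finally, the exact relation $a_p(f)=-w_p$ for $p\divs N$ is a consequence of Atkin--Lehner--Li theory and is independent of the reducibility hypothesis: for a weight-$2$ newform with $p\| N$ one has $a_p(f)^2=1$ and $a_p(f)=-w_p$. I would simply cite this, noting that it is an equality in~$\Of$ rather than a mere congruence, which is what later allows the determination of the sign~$w_p$ to govern the congruence of~$f$ to an Eisenstein series at the primes dividing~$N$.
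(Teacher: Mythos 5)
Your proposal is correct in substance but takes a genuinely different, self-contained route where the paper simply cites. For the congruence at primes $\ell \ndiv N$, the paper's proof is one step: reducibility of~$\rho_{\m}$ forces $T_\ell - \ell - 1 \in \m$ by a quoted result (\cite[p.~362]{yoo2}), and this is then transported through $\T/\m \ra \T_f/\m' \ra \Of/\pp$ exactly as in your third paragraph. Your first two paragraphs in effect reconstruct the standard argument that lies behind that citation: write $\rho_\m \isom \bar\chi^{a_1}\oplus\bar\chi^{a_2}$ using that the characters are unramified outside~$r$ (special local representation at $p\,\|\,N$, characters into a characteristic-$r$ field killing pro-$r$ inertia), then pin down $\{a_1,a_2\}=\{0,1\}$ by finite flatness at~$r$. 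This is sound, but it buys less generality than the citation: at this point the paper explicitly does \emph{not} yet assume $N$ square-free or $r \ndiv 6N$ (Lemma~\ref{lem:r} later handles $r \divs N$, and Proposition~\ref{prop:mazur2} allows $r=3$), whereas your identification of the characters uses $r$ odd, $r \ndiv N$ (needed for finite flatness at~$r$), and $p\,\|\,N$ for every $p \divs N$; you flag the last restriction but not the first two, so as a proof of the lemma in its stated generality your argument is incomplete at those edges, though it suffices for every application made under the main theorem's hypotheses. Two smaller points: your handling of $\ell = r$ is right but compressed --- to identify $T_r \bmod \m$ with the Frobenius eigenvalue on the unramified quotient you need Deligne's theorem (\cite[Theorem~2.5]{edixhoven:weight}, quoted later in the paper) together with the observation that global reducibility rules out the supersingular case (otherwise inertia at~$r$ would act through level-two fundamental characters, incompatible with $\rho_\m^{ss}|_{I_r} \isom {\bf 1}\oplus\bar\chi|_{I_r}$); the paper's citation covers $\ell = r$ wholesale. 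Finally, for $p \divs N$ your appeal to Atkin--Lehner--Li is exactly the paper's proof ($U_p = -W_p$ on the new subspace, whence $a_p(f) = -w_p$ as an equality in~$\Of$), and both you and the paper implicitly use $p\,\|\,N$ there.
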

\begin{proof}
Suppose $\ell \ndiv N$. 
Since $\rho_{\m}$ is reducible, it follows from~\cite[p.~362]{yoo2} that 
$T_\ell - \ell -1 \in \m$. Thus the image of~$T_\ell - \ell -1$ under
the composite $\T/{\m} \ra \T_f/{\m'} \ra \Of/\pp$ is zero; but
this image is the coset represented by 
$a_\ell(f) - 1 - \ell$; hence $a_\ell(f) - 1 - \ell \in \pp$, which
proves the first assertion.

If $p \divs N$, then $a_p(f) = - w_p$ because
$U_p = - W_p$ on the new subspace of~$S_2(\Gamma_0(N),\C)$.
This finishes the proof of the lemma.
\end{proof}

The following proposition is well known, but we give details for the 
convenience of the reader.
\begin{prop}\label{prop:eis}
Suppose $N$ is square-free. 
For every prime~$p$ that divides~$N$,
suppose we are given
an integer $\delta_p \in \{1,p\}$ such that  
$\delta_p = 1$ for at least one~$p$.
Then there is an Eisenstein series~$E$ of weight~$2$ on~$\Gamma_0(N)$ 
which is an eigenfunction for all the Hecke operators such that
for all primes~$\ell \ndiv N$, we have 
$a_\ell(E) = \ell + 1$, and for all primes~$p \divs N$,
we have $a_p(E) = \delta_p$. 
Moreover, $E$ has integer Fourier coefficients, except perhaps~$a_0(E)$,
which is a rational number 
whose denominator (in reduced form) divides~$24$,
and:\\
(a) if for at least one prime~$p$ that divides~$N$ we have
$\delta_p = p$, then $a_0(E) =0$, and\\
(b) if $N$ is prime, then $a_0(E) = (1-N)/24$. 
\end{prop}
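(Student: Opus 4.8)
The plan is to construct the Eisenstein series explicitly from the standard weight-$2$ Eisenstein series attached to the group $\Gamma_0(N)$ and then verify that its Hecke eigenvalues match the prescribed data. Recall that for the full modular group the only obstruction to a holomorphic weight-$2$ Eisenstein series is the nonconvergence at the cusp, but on $\Gamma_0(N)$ with $N>1$ one has a genuine space of Eisenstein series whose dimension equals the number of cusps minus one. For $N$ square-free the cusps of $X_0(N)$ are indexed by the divisors of~$N$, and the Hecke eigenforms in the Eisenstein subspace are built from the function $E_{2,N}(z) = E_2(z) - N\,E_2(Nz)$, where $E_2$ is the quasimodular weight-$2$ form with $q$-expansion $1 - 24\sum_{n\ge 1}\sigma_1(n)q^n$. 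More precisely, for each way of distributing the primes dividing~$N$ one obtains a new-at-each-$p$ eigenform, and the collection of eigenvalues at $U_p$ is exactly what lets me realize any prescribed choice $\delta_p \in \{1,p\}$.

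First I would recall the $U_p$- and $T_\ell$-eigenvalue computation for these forms. The key fact is that for $E_2(z) - p\,E_2(pz)$ the operator $U_p$ acts with eigenvalue~$1$, while $E_2(z) - E_2(pz)$ (suitably normalized) gives eigenvalue~$p$; at the level of $q$-expansions this is a direct computation with the action of $U_p$ on $\sigma_1$. Writing $N = \prod_{p\divs N} p$ and using that the Hecke operators at distinct primes commute, I would take the tensor-type product combination that selects, for each $p$, the local factor with $a_p(E)=\delta_p$. The resulting $E$ is a weight-$2$ Eisenstein eigenform on $\Gamma_0(N)$ whose eigenvalue at $T_\ell$ for $\ell\ndiv N$ is $\ell+1$ (this is the universal feature of the weight-$2$ Eisenstein eigenvalue, matching Lemma~\ref{lem:mazur0}) and at $U_p$ is $\delta_p$, as desired. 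The hypothesis that $\delta_p = 1$ for at least one~$p$ is precisely what guarantees that the combination is holomorphic rather than merely quasimodular: the $E_2$ ambiguity (the non-modular term proportional to $1/\mathrm{Im}(z)$) cancels exactly when at least one local factor is of the ``$E_2(z)-p\,E_2(pz)$'' type, so the main obstacle in a fully rigorous write-up is tracking this cancellation of the anomalous term.

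Next I would read off the constant term. Since $E_2$ has constant term~$1$, the constant term of any combination $\prod$ evaluated along the $q$-expansion is a product of the local constant terms; the local factor giving $\delta_p = p$ has constant term $1 - p\cdot 1 = 0$ at the relevant cusp normalization, whereas the local factor giving $\delta_p=1$ contributes a nonzero rational constant. This immediately yields~(a): if $\delta_p = p$ for some~$p$, one of the factors contributes a zero constant term and hence $a_0(E)=0$. For the integrality and the denominator bound, I would note that $E_2$ has integer $q$-expansion coefficients away from the constant term, and the $-24$ appearing in $E_2 = 1 - 24\sum\sigma_1(n)q^n$ is exactly what forces any rational constant term to have denominator dividing~$24$; all higher coefficients $a_n(E)$ for $n\ge 1$ are $\Z$-linear combinations of $\sigma_1$-values and hence integral.

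Finally, for part~(b), when $N$ is prime the only admissible choice is $\delta_N = 1$ (since we need $\delta_p=1$ for at least one~$p$ and there is only one prime), so $E$ is the normalized form $E = \tfrac{1}{24}\bigl(N\,E_2(Nz) - E_2(z)\bigr)$ up to the standard scaling that makes $a_\ell(E)=\ell+1$. Its constant term is $\tfrac{1}{24}(N\cdot 1 - 1) \cdot(\pm 1)$; matching signs against the requirement $a_\ell(E)=\ell+1$ fixes the normalization and gives $a_0(E) = (1-N)/24$, which is the classical value. I expect the genuinely delicate step to be the holomorphy/cancellation argument in the second paragraph, since it is what makes the hypothesis ``$\delta_p = 1$ for at least one~$p$'' essential; the eigenvalue computations and the constant-term bookkeeping are routine once the right building blocks $E_2(z) - p\,E_2(pz)$ are in place, and I would organize the write-up so that the holomorphic eigenform is exhibited first and all three numerical claims (a), (b), and the denominator bound follow by inspection of its $q$-expansion.
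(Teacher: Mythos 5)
Your construction is essentially the paper's own: the paper delegates the existence statement to Proposition~2.1 of its reference~\cite{tor}, whose proof is exactly your iterated level-raising from the quasimodular $E_2$, using the two local building blocks $g(q)-p\,g(q^p)$ (there called $g_p$, giving $U_p$-eigenvalue $1$) and $g(q)-g(q^p)$ (there called $\tilde g_p$, giving eigenvalue $p$); the paper's proof then only has to track constant terms through this construction to get the denominator bound and parts (a) and (b). Your identification of the role of the hypothesis ``$\delta_p=1$ for at least one $p$'' --- that the anomalous $1/\mathrm{Im}(z)$ term of $E_2$ cancels exactly when some local factor is of the form $1-pV_p$ --- is correct and is in fact more explicit than what the paper records, and your product bookkeeping for (a) (a factor $1-V_p$ contributes constant multiplier $1-1=0$) matches the paper's observation that the $\tilde g_p$ step kills the constant term and subsequent raisings preserve the zero.

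The one genuine weak point is your treatment of (b). Your own normalization is forced: with $a_\ell(E)=\ell+1$ for $\ell\nmid N$ and $N$ prime, the unique Eisenstein eigenform is $E=\tfrac{1}{24}\bigl(N\,E_2(Nz)-E_2(z)\bigr)$, whose constant term is $(N-1)/24$, not $(1-N)/24$; there is no residual ``$\pm1$'' freedom left to fix, since flipping the sign would flip all the $a_\ell$ as well. So the sign-matching step in your last paragraph does not actually go through, and it papers over a discrepancy that the published proof shares: the paper asserts $a_0(e)=1/24$, whereas the $e$ with $a_n(e)=\sigma(n)$ for $n\ge 1$ is $-E_2/24$, with $a_0(e)=-1/24$, which yields $(N-1)/24$ --- consistent with the classical value in Mazur's normalization (and indeed, in the application in the proof of Proposition~\ref{prop:mazur2} the paper itself uses $a_0(E)=(p-1)/24$). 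The slip is harmless downstream, because the constant term is only ever used through its image modulo~$3$ or~$9$, where only unit-ness matters; but a careful write-up of your proof should state $(N-1)/24$, or else flag the sign convention explicitly rather than absorbing it into an unexplained normalization.
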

\begin{proof}
Everything except the last sentence is~\cite[Proposition~2.1]{tor}.
We now prove the last sentence. Note that $e(q)$ in 
 the first sentence in the proof of Proposition~2.1 in~\cite{tor}
has integer Fourier coefficients
except for the zeroth, which is $1/24$, and since $E$ is obtained
by ``raising levels'' from~$e(q)$, it follows that
$E$ has integer Fourier coefficients, except perhaps~$a_0(E)$,
which is a rational number 
whose denominator (in reduced form) divides~$24$.

We now prove part~(a). Suppose that $\delta_p = p$ for a prime~$p$ that divides~$N$. 
Note that in the proof of Proposition~2.1 in loc. cit.,
in the second last sentence of the second last paragraph, it
says that the $s$-th Fourier coefficient may be chosen to be~$1$ or~$s$;
this is done by using the construction in the second paragraph of the proof in loc. cit.,
with $g_r$ or $\tilde g_r$ respectively. We take $s$ in 
the second last paragraph of the proof of
Proposition~2.1 in loc. cit. to be our~$p$
for which $\delta_p = p$, and 
choose the $s$-th Fourier coefficient to be~$s$; 
so we are using the $\tilde g_r$ construction.
But $\tilde g_r(q) = g(q) - g(q^r)$, and so 
$a_0(\tilde g_r(q)) = a_0(g(q)) - a_0(g(q^r)) 
= a_0(g(q)) - a_0(g(q)) =0$, and so the zeroth Fourier coefficient 
at the end of the second last paragraph of the proof of
Proposition~2.1 in loc. cit. is~$0$.
Further level raising in the last paragraph of that proof does not change the 
zeroth Fourier coefficient, which proves part~(a) of the proposition.

Finally, we prove part~(b). Suppose that $N$ is prime.
Then by hypothesis, $\delta_N = 1$, and so 
we use
the construction~$g_r(q) = g(q) - r \cdot g(q^r)$, with $r=N$ and $g(q)=e(q)$. So
$a_0(E) = a_0(e_N) = (1-N)a_0(e) = (1-N)/24$,
which proves 
part~(b) of the proposition.
\end{proof}

Keeping in mind the strategy of the proof of 
our main theorem (Theorem~\ref{thm:main})
mentioned 
in the introduction, 
we see from the lemma and proposition above 
that coming up with an Eisenstein series~$E$ such that
$a_\ell(f) \equiv a_\ell(E) \bmod r$ for all primes $\ell \ndiv N$
is rather easy. Proving the congruence for all primes~$p$ that divide~$N$
for a suitable Eisenstein  series
is the tricky part, for which we need the results below.

\begin{lem}[Yoo] \label{lem:dum}
Suppose $r \geq 3$.
If $p$ is a prime such that $p \divs N$, $p \neq r$, and $w_p = 1$, then
$p \equiv -1 \bmod \pp$.
\end{lem}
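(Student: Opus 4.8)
The plan is to pin down the action of $\Frobp$ on $\rho_{\m}$ in two different ways and compare them. First I would record what reducibility already forces globally. By Lemma~\ref{lem:mazur0} the semisimplification $\rho_{\m}^{\rm ss}$ has $\operatorname{tr}\rho_{\m}^{\rm ss}(\Frob_\ell)\equiv 1+\ell$ and $\det\rho_{\m}^{\rm ss}(\Frob_\ell)\equiv \ell \pmod{\pp}$ for all $\ell\ndiv N$; since $\det\rho_{\m}$ is the reduction mod~$r$ of the cyclotomic character (weight~$2$, trivial nebentypus), the Frobenius eigenvalues at each such $\ell$ are $\{1,\ell\}$, so by Brauer--Nesbitt and Chebotarev $\rho_{\m}^{\rm ss}\cong \mathbf{1}\oplus\chi$, where $\chi$ is the mod-$r$ cyclotomic character. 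In particular, for the prime $p\divs N$ with $p\neq r$, the character $\chi$ is unramified at~$p$ with $\chi(\Frobp)\equiv p$, so the characteristic polynomial of $\Frobp$ acting on $\rho_{\m}^{\rm ss}$ is $(X-1)(X-p)$ over $\Of/\pp$.

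Next I would compute the same characteristic polynomial from the local structure of $f$ at $p$. Because $f$ is new of weight~$2$ and $p\divs N$, the local representation of $\rho_f$ at~$p$ is (an unramified twist of) the special/Steinberg representation, so by local--global compatibility the Frobenius eigenvalues of the associated Weil--Deligne representation are $\{a_p,\, p\,a_p\}$; note their product is $a_p^2\,p=p$, matching the determinant since $a_p=\pm 1$ here. Reducing mod~$\pp$ and invoking Brauer--Nesbitt, the two characteristic polynomials of $\Frobp$ must agree, giving $(X-a_p)(X-p\,a_p)\equiv (X-1)(X-p)\pmod{\pp}$. Comparing traces yields $a_p(1+p)\equiv 1+p\pmod{\pp}$. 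Now $w_p=1$ forces $a_p=-w_p=-1$ by Lemma~\ref{lem:mazur0}, so $-(1+p)\equiv 1+p$, i.e.\ $2(1+p)\equiv 0\pmod{\pp}$; since $r\geq 3$ the element $2$ is a unit in $\Of/\pp$, and therefore $p\equiv -1\pmod{\pp}$, as desired.

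The step I expect to be the main obstacle is the local input at the bad prime: correctly identifying the $\Frobp$-eigenvalues of the local Weil--Deligne representation as $\{a_p,\,p\,a_p\}$ and justifying the Frobenius comparison despite the ramification of $\rho_{\m}$ at~$p$ (so that Brauer--Nesbitt applies to the semisimplifications). Everything after that is a one-line arithmetic computation modulo~$\pp$ that uses only $a_p=-1$ and $r\geq 3$. Since the lemma is attributed to Yoo, I anticipate the intended route packages this local analysis through the structure of Eisenstein maximal ideals: reducibility of $\rho_{\m}$ should force, for each $p\divs N$ with $p\neq r$, that $\m$ contains $U_p-1$ or $U_p-p$ (the analog at~$p$ of the relation $T_\ell-\ell-1\in\m$ from \cite{yoo2} used in Lemma~\ref{lem:mazur0}). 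Evaluating on the $U_p$-eigenvalue $a_p=-1$, the first option gives $-1\equiv 1\pmod{\pp}$, excluded by $r\geq 3$, so the second holds and yields $-1\equiv p\pmod{\pp}$ immediately. I would present the Galois-local derivation as the self-contained argument and cite Yoo's Eisenstein-ideal description as the streamlined alternative.
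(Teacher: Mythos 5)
Your proposal is correct and is essentially the paper's own argument (following Yoo): the paper likewise identifies $\rho_{\m}^{\rm ss} \isom {\bf 1} \oplus \chi_r$ globally and compares it at $p$ with the local semisimplification $\epsilon \oplus \epsilon\chi_r$, $\epsilon(\Frob_p) = a_p = -1$, citing~\cite[Theorem~3.1(e)]{ddt} for exactly the Steinberg/local--global input you derive by hand, then concludes $p \equiv -1 \bmod \pp$ using $r \geq 3$. Your closing sketch via $U_p - 1$ or $U_p - p$ lying in~$\m$ is a reasonable packaging of the same local analysis, but the Galois-theoretic comparison you present first is the route the paper actually takes.
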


\begin{proof} 
The argument is essentially given in the proof 
of~\cite[Lemma 2.1]{yoo2}; we repeat it here for the convenience of the reader.
Since $\rho_{\m}$ is reducible, 
$\rho_{\m} \isom {\bf 1} \oplus \chi_r$, where ${\bf 1}$ is the
trivial character and $\chi_r$ is the mod~$r$ cyclotomic character.
On the other hand, the semisimplification of the restriction of~$\rho_{\m}$
to~${\rm Gal}(\overline{\Q}_p/\Q_p)$ is isomorphic
to~$\epsilon \oplus \epsilon \chi_r$, where $\epsilon$ is the
unramified quadratic character with $\epsilon({\rm Frob})_p = a_p = -1$
because ${\m}$ is~$p$-new (cf.~\cite[Theorem~3.1(e)]{ddt}). 
From this, we get $p \equiv -1 \bmod r$, and since $r \in \pp$, we get our result. 
\end{proof}

\begin{lem}\label{lem:r}
Suppose $r \geq 3$ and $r \divs N$. Then $w_r = -1$. 
\end{lem}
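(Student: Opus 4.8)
The plan is to reduce the statement to a single congruence for the Hecke eigenvalue $a_r(f)$ and then read that congruence off the local structure of the Galois representation at~$r$. First I would record, using Lemma~\ref{lem:mazur0}, that $a_r(f) = -w_r$, and that $w_r \in \{\pm 1\}$ because $W_r$ is an involution; hence $a_r(f) \in \{\pm 1\}$. Since $r \geq 3$, the residues $1$ and $-1$ are distinct modulo~$\pp$, so it suffices to prove the single congruence $a_r(f) \equiv 1 \pmod{\pp}$: this then forces $a_r(f) = 1$ and therefore $w_r = -1$.

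To establish this congruence I would exploit ordinarity at~$r$. Because $a_r(f) = \pm 1$ is a $\pp$-adic unit, $f$ is ordinary at~$r$, and $r$ is precisely the residue characteristic of~$\pp$. By the standard description of the local representation at the residue characteristic for an ordinary form (the case $p = r$ of the local analysis, cf.~\cite[Theorem~3.1]{ddt}), the restriction of the $\pp$-adic representation $\rho_{f,\pp}$ attached to~$f$ to a decomposition group $G_{\Q_r} = {\rm Gal}(\overline{\Q}_r/\Q_r)$ is, up to semisimplification, a sum of two characters
$$
\rho_{f,\pp}|_{G_{\Q_r}}^{\rm ss} \isom (\chi_{\rm cyc}\lambda^{-1}) \oplus \lambda ,
$$
where $\lambda$ is the unramified character with $\lambda(\Frob_r) = a_r(f)$ (the Frobenius normalization is immaterial here, since $a_r(f) = \pm 1$ is its own inverse) and $\chi_{\rm cyc}$ is the cyclotomic character; here I use $\det \rho_{f,\pp} = \chi_{\rm cyc}$, as $f$ has weight~$2$ and trivial character. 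Reducing modulo~$\pp$ and extending scalars to $\Of/\pp$, the Jordan--H\"older constituents of $\rho_{\m}$ become those of the reduction, namely the unramified character $\bar\lambda$ with $\bar\lambda(\Frob_r) = a_r(f) \bmod \pp$, together with $\chi_r\bar\lambda^{-1}$.

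Then I would bring in reducibility. Since $\rho_{\m}$ is reducible we have $\rho_{\m} \isom {\bf 1} \oplus \chi_r$ (as in the proof of Lemma~\ref{lem:dum}), so its constituents over $G_{\Q_r}$ are ${\bf 1}$ and $\chi_r|_{G_{\Q_r}}$. Here ${\bf 1}$ is unramified, while $\chi_r|_{G_{\Q_r}}$ is ramified: its restriction to the inertia group $I_r$ is the mod~$r$ cyclotomic character on inertia, which is nontrivial because $r \geq 3$ (it has order $r-1 \geq 2$). Matching the two sets of constituents of $\rho_{\m}|_{G_{\Q_r}}$, the unramified character $\bar\lambda$ cannot equal the ramified $\chi_r|_{G_{\Q_r}}$, so $\bar\lambda = {\bf 1}$. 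Evaluating at Frobenius gives $a_r(f) \equiv \bar\lambda(\Frob_r) = {\bf 1}(\Frob_r) = 1 \pmod{\pp}$, which is exactly the congruence sought; hence $a_r(f) = 1$ and $w_r = -1$.

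The main obstacle is the first displayed step: correctly invoking the local structure at the prime~$r$ that coincides with the residue characteristic. This is the delicate ``$\ell = p$'' situation, and the conclusion that the reduction has an \emph{unramified} quotient character with $\Frob_r \mapsto a_r(f)$ genuinely uses ordinarity (equivalently, that $a_r(f)$ is a unit, which is where Lemma~\ref{lem:mazur0} and $w_r = \pm 1$ are essential). Once the unramified-versus-ramified dichotomy is in place, comparing it with the ramified constituent $\chi_r|_{G_{\Q_r}}$ produced by reducibility is routine, and the hypothesis $r \geq 3$ enters exactly twice: to guarantee that $\chi_r$ is ramified at~$r$, and to ensure that the residues $\pm 1$ are distinct modulo~$\pp$.
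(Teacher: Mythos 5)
Your proof is correct, and it shares its skeleton with the paper's: both arguments pivot on the congruence $a_r(f) \equiv 1 \bmod \pp$ (equivalently $U_r \equiv 1 \bmod \m$) and then conclude identically, since $w_r = 1$ would give $a_r(f) = -1$ by Lemma~\ref{lem:mazur0}, forcing $2 \in \m$, impossible as $r \geq 3$ is odd. The difference is how the congruence is obtained. The paper simply quotes it: $U_r \equiv 1 \bmod \m$ by Ribet's Lemma~1.1 in~\cite{ribet:eisenstein}, or alternatively by Deligne's result as recorded in~\cite[Theorem~2.5]{edixhoven:weight}. You instead derive it: ordinarity of~$f$ at~$\pp$ is automatic since $a_r(f) = \pm 1$ is a unit, the Steinberg/ordinary local description gives $\rho_{f,\pp}|_{G_{\Q_r}}$ an \emph{unramified} quotient character $\lambda$ with $\lambda(\Frob_r) = a_r(f)$, Brauer--Nesbitt transfers the constituents to the reduction, and the unramified-versus-ramified dichotomy (using that $\chi_r$ is nontrivial on inertia at~$r$ precisely because $r \geq 3$) matches $\bar\lambda$ with the trivial constituent of $\rho_\m \isom {\bf 1} \oplus \chi_r$. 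That derivation is in substance a proof of the black box the paper cites --- indeed Deligne's theorem invoked via~\cite{edixhoven:weight} is exactly the statement that the $U_r$-eigenvalue is the Frobenius eigenvalue on the unramified quotient --- so your argument is self-contained where the paper's is a citation, at the cost of invoking the delicate $\ell = p$ local theory. Two small bookkeeping points, neither a gap: your use of $\rho_\m \isom {\bf 1} \oplus \chi_r$ rests on the same assertion the paper makes in the proof of Lemma~\ref{lem:dum}, so you are on equal footing with the paper there; and your local input at $p = r$ is the $\ell = p$ multiplicative/ordinary case (Tate curve, or Wiles's ordinary theorem), which is standard but is not literally covered by the citation \cite[Theorem~3.1(e)]{ddt} as used in Lemma~\ref{lem:dum}, which there concerns only $p \neq r$.
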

\begin{proof}
As mentioned on p.~363 in~\cite{yoo2}, we have
$U_r \equiv 1 \bmod {\m}$ by~\cite[Lemma 1.1]{ribet:eisenstein}; it is
mentioned on p.~362 in~\cite{yoo2} that the quoted lemma also 
follows from the result by Deligne given in~\cite[Theorem~2.5]{edixhoven:weight}.
So if $w_r = 1$, then $a_r = -1$, and so
$-1 \equiv 1 \bmod {\m}$, i.e. $2 \in {\m}$, which
is not possible since $r$ is odd; thus $w_r = -1$. 
\end{proof}


\begin{prop} \label{prop:mazur2}
Recall that we are assuming that $\rho_{\m}$ is reducible.
Suppose that $N$ is square-free and
$r > 3$ or that $r=3$ and $\Of/\pp \isom \Z/3\Z$ (the latter happens if $f$ has
rational Fourier coefficients, i.e., $A$ is an elliptic curve). 
Then there is a prime~$p$ that divides~$N$ such that $w_p = -1$, i.e., $a_p(f)=1$.
\end{prop}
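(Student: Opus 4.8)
The plan is to argue by contradiction: suppose that $w_p = 1$ for every prime $p \divs N$, equivalently (by Lemma~\ref{lem:mazur0}) that $a_p(f) = -1$ for all $p \divs N$, and derive a contradiction. First I would dispose of the case $r \divs N$ at once: since $r \geq 3$, Lemma~\ref{lem:r} gives $w_r = -1$, contradicting the assumption. Hence I may assume $r \ndiv N$, so that every prime $p \divs N$ satisfies $p \neq r$; applying Lemma~\ref{lem:dum} to each such $p$ (for which $w_p = 1$) then yields $p \equiv -1 \bmod \pp$ for all $p \divs N$.

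The mechanism for the contradiction is the following. Because $\rho_{\m}$ is reducible we have $\rho_{\m} \isom {\bf 1} \oplus \chi_r$, so the mod-$\pp$ Hecke eigensystem attached to $f$ is Eisenstein at the good primes: $a_\ell(f) \equiv \ell + 1 \bmod \pp$ for $\ell \ndiv N$. I would then ask which weight-$2$ Eisenstein eigenform of level $N$ can be congruent to $f$ modulo $\pp$. Such an eigenform has $a_\ell = \ell+1$ for $\ell \ndiv N$ and, for $p \divs N$, a $U_p$-eigenvalue $\delta_p \in \{1,p\}$ as in Proposition~\ref{prop:eis}; a congruence with $f$ forces $\delta_p \equiv a_p(f) = -1 \bmod \pp$ for every $p \divs N$. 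Now $\delta_p = 1$ is impossible, since $1 \equiv -1 \bmod \pp$ would give $2 \in \pp$ and hence $r = 2$; while $\delta_p = p$ is consistent precisely because $p \equiv -1 \bmod \pp$, which is what Lemma~\ref{lem:dum} supplied. Thus the only candidate has $\delta_p = p$ for every $p \divs N$. But Proposition~\ref{prop:eis} produces an Eisenstein eigenform with prescribed $\delta_p$ only when $\delta_p = 1$ for at least one $p$; the all-$\delta_p = p$ pattern is exactly the one weight-$2$ Eisenstein eigensystem that is not realized (it is the non-modular direction spanned by the quasimodular $E_2$, as a dimension count of the Eisenstein space shows). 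Hence no Eisenstein eigenform of level $N$ is congruent to $f$, the desired contradiction, and therefore $w_p = -1$ for at least one $p \divs N$.

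The step I expect to be the main obstacle is the one asserting that residual reducibility of $\rho_{\m}$ does force $f$ to be congruent modulo $\pp$ to a genuine Eisenstein eigenform of level exactly $N$ matching the $U_p$-eigenvalues --- equivalently, that the all-$\delta_p = p$ eigensystem cannot occur in the cuspidal Hecke algebra. This is where the structure of the Eisenstein ideal (as analyzed by Yoo) enters, and where I expect the hypothesis $r \ndiv 6N$ to be used: the constant-term computation of Proposition~\ref{prop:eis}, whose denominator divides $24$, governs the relevant Eisenstein congruence number, and $r > 3$ (so that $r \ndiv 24$) keeps this quantity $\pp$-integral and the dichotomy $\delta_p \equiv a_p(f)$ clean. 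The residual characteristic $r = 3$ has to be isolated precisely because $3 \divs 24$; imposing $\Of/\pp \isom \Z/3\Z$ (which holds automatically when $f$ has rational Fourier coefficients, i.e.\ $A$ is an elliptic curve) keeps all the congruences literally modulo $3$, so that the factor of $3$ in the denominator can be tracked and the same dichotomy survives.
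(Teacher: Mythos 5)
Your opening reductions coincide with the paper's: dispose of $r \divs N$ via Lemma~\ref{lem:r}, then apply Lemma~\ref{lem:dum} to get $p \equiv -1 \bmod \pp$ for every $p \divs N$. But the core of your argument has a genuine gap, and you have in fact flagged it yourself. Your contradiction rests on the assertion that reducibility of $\rho_{\m}$ forces $f$ to be congruent modulo~$\pp$ to an \emph{actual holomorphic Eisenstein eigenform of level $N$ whose $U_p$-eigenvalues match $a_p(f)$ at every $p \divs N$}, so that the unrealized all-$\delta_p = p$ pattern yields a contradiction. No such principle is available: the congruence $a_\ell(f) \equiv \ell + 1 \bmod \pp$ at good primes does not, by itself, produce a characteristic-zero Eisenstein eigenform matching $f$ at the bad primes --- as the paper's introduction notes, establishing the congruence at indices not coprime to~$N$ is precisely ``the tricky part.'' Worse, the statement you lean on, that ``the all-$\delta_p = p$ eigensystem cannot occur in the cuspidal Hecke algebra'' modulo~$\pp$, is essentially equivalent to the proposition being proved: under your contradiction hypothesis, $f \bmod \pp$ \emph{itself} realizes that eigensystem (since $a_p(f) = -1 \equiv p \bmod \pp$), so invoking its impossibility is circular. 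The heuristic that this eigensystem points in the quasimodular $E_2$ direction is suggestive but is not an argument in characteristic~$r$, where a weight-$2$ form with that eigensystem could a priori exist.

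What the paper supplies, and what your proposal is missing, is the mechanism that converts the heuristic into a proof: a level-lowering induction on geometric mod-$\pp$ modular forms (Katz forms in $\omega^{\tensor 2}$). One defines ``special at level~$M$'' by $a_n(g) \equiv \sigma\big(\frac{n}{(n,M)}\big)\prod_{p \mid M}(-1)^{\ord_p(n)} \bmod \pp$, checks that $f \bmod \pp$ is special at level~$N$, and then shows: choosing $E$ from Proposition~\ref{prop:eis} with $\delta_p = p$ for $p \neq s$ and $\delta_s = 1$, the congruence $p \equiv -1 \bmod \pp$ makes $(E - g) \bmod \pp$ a power series in $q^s$, so Ohta's lemma (\cite[Lemma~2.1.1]{ohta:eisenstein}) descends it to level $M/s$; after dividing by~$2$ (this is one place $r \neq 2$ is used) the descended form is again special. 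Iterating down to level~$1$ contradicts Mazur's vanishing result \cite[Lemma II.5.6(a)]{mazur:eisenstein} for $r > 3$ --- this, not a $\pp$-integrality of Eisenstein constant terms, is where $r > 3$ enters. The case $r = 3$ is not a bookkeeping refinement of the same dichotomy, as you suggest: the descent only reaches prime level (part~(a) of Proposition~\ref{prop:eis} keeps $a_0(E) = 0$ while $M$ is composite), and the endgame splits according to $p \bmod 3$, using the constant term $(1-N)/24$ of part~(b), Mazur's \cite[Prop~II.5.6(c)(ii)]{mazur:eisenstein} and \cite[Prop~II.4.10]{mazur:eisenstein} when $p \equiv 1 \bmod 3$, and the ``$\times 3$'' construction over $\Z/9\Z$ with \cite[Lemma II.5.6(b)]{mazur:eisenstein} when $p \equiv 2 \bmod 3$; the hypothesis $\Of/\pp \isom \Z/3\Z$ is needed to regard $f \bmod \pp$ as a form over $\Z/3\Z$ for these constructions. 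Your appeal to ``the structure of the Eisenstein ideal (as analyzed by Yoo)'' names a body of work where related statements exist, but it is not a proof of the step, and the paper deliberately proceeds without it.
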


\begin{rmk} \label{rmk2}
Special cases of the proposition above have already appeared in the literature.
The result is stated for $r >3$ as Theorem~1.2(1) in~\cite{yoo:non-optimal}, and 
attributed to Ribet, but for a proof, the author refers to Theorem~2.6(ii)(b)
of~\cite{bd};  however in loc. cit., the result 
is stated  under the extra hypothesis
that $r \ndiv N$ (apart from $r \neq 3$).
The referee to an earlier version of this article informed us
that our proposition (at least when $r>3$ and $f$ has integer Fourier coefficients)
follows from Proposition~5.5 in~\cite{yoo2} or
Theorem 3.1.3 in~\cite{ohta:eisenstein}, though the desired result
is not stated as such in a simple form in either to be quoted 
(one would have to deduce it 
after reading the notations, etc.). In any case, 
Theorem 3.1.3 of~\cite{ohta:eisenstein} uses 
Proposition 3.4.2 in loc. cit, in whose proof the author
says that ``this inductive step was inspired by the similar
argument of Proposition 3.5'' of the preprint version of~\cite{tor}; 
in the publised version~\cite{tor}, this became Proposition 3.6, which
we are suitably modifying below.
\end{rmk}

\begin{proof}  
By Lemma~\ref{lem:r}, if $r \divs N$, then $w_r = -1$, and we are done.
Thus we may assume henceforth that
$r \ndiv N$. 

The rest of the proof for $r > 3$ is a generalization of the proof of Proposition~3.6
of~\cite{tor}. We give the full details for two reasons: one is that
the case of $r = 3$ requires referring to the case $r >3$,
and the second is that 
the proof in loc. cit. had an error that we have fixed here
(see Remark~\ref{rmk3} below).

\comment{
If $r > 3$, then the result follows from the proof of 
Proposition~3.6 of~\cite{tor}  with the following changes:
replace $\Z/r\Z$ by~$\Of/\pp$, \mbox{$\bmod\ r$} by~\mbox{$\bmod \pp$}, 
Lemma~3.1 by our Lemma~\ref{lem:mazur0}, 
Lemma~3.2 by our Lemma~\ref{lem:dum}, and Lemma~3.5 by~\cite[Lemma~2.1.1]{ohta:eisenstein}.
}

Suppose, contrary to the conclusion of the Proposition, 
that for every  prime~$p$ 
that divides~$N$, we have $w_p = 1$. Then by Lemma~\ref{lem:dum},
for every  prime~$p$ 
that divides~$N$, we have $p \equiv -1 \bmod \pp$ 
(note that $p \neq r$ since $r \ndiv N$).

Following \cite[p.~77 and p.~70]{mazur:eisenstein},
by a holomorphic modular form in~$\omega^{\tensor 2}$
on~$\Gamma_0(N)$ defined over a ring~$R$,
we mean a modular form in the sense of
\cite[\S1.3]{katz:antwerp350} (see also \cite[\S~VII.3]{deligne-rapoport}).
Thus such an object is a rule which assigns to each pair
$(E_{/T},H)$, where $E$ is an elliptic curve over an $R$-scheme~$T$
and $H$ is a finite flat subgroup scheme of~$E_{/T}$ of order~$N$,
a section of~$\omega_{\scriptscriptstyle E_{/T}}^{\tensor 2}$,
where $\omega_{\scriptscriptstyle E_{/T}}$ is the sheaf of invariant differentials.
Since $r \ndiv N$, 
by~\cite[Lemma~1.3.5]{ohta:eisenstein}, 
if $g$ is a modular form of weight~$2$
on~$\Gamma_0(N)$ with coefficients in~$\Of/\pp$,
then there is a holomorphic modular form  in~$\omega^{\tensor 2}$
on~$\Gamma_0(N)$ defined over~$\Of/\pp$, which we will
denote $g \bmod \pp$,  such that the
$q$-expansion of~$g \bmod \pp$ agrees with the $q$-expansion
of~$g$ modulo~$\pp$. 

If $M$ is a postive integer, then let
us say that a holomorphic modular form~$g$ in~$\omega^{\tensor 2}$
on~$\Gamma_0(M)$ defined over~$\Of/\pp$ is {\em special at level~$M$}
if $a_n(g) \equiv \sigma(\frac{n}{(n,M)}) \prod_{p \mid M} 
(-1)^{{\rm ord}_p(n)} \bmod \pp$
for all positive integers~$n$.
Using Lemma~\ref{lem:mazur0} and the fact that $f$ is an eigenvector
for all the Hecke operators,
we see that $f \bmod \pp$ is special at level~$N$. 

Consider first the case where $r > 3$.

\noindent{\em Claim:} 
If $M$ is a square-free integer and 
$g$ is a holomorphic modular form in~$\omega^{\tensor 2}$
on~$\Gamma_0(M)$ 
defined over~$\Of/\pp$ that is special at level~$M$ and 
$s$ is a prime that divides~$M$, then there exists
a holomorphic modular form in~$\omega^{\tensor 2}$
on~$\Gamma_0(M/s)$ 
defined over~$\Of/\pp$ that is special at level~$M/s$
(which is also square-free).
\begin{proof}
By Proposition~\ref{prop:eis},
there is an Eisenstein series~$E$ which is an eigenvector
for all the Hecke operators, with $a_\ell(E) = \ell + 1$
for all primes $\ell \ndiv M$,
$a_p(E) = p$ 
for all primes~$p$ that divide~$M$ except $p = s$, and $a_s(E) = 1$; also
since $r > 3$, we can talk about $E \bmod \pp$.
Let $p_1, \ldots, p_t$ be the distinct primes that divide~$M/s$.
Then for any positive integer~$n$,
$$a_n(E) = \sigma \bigg(\frac{n}{(n,M)} \bigg) 
\prod_{i=1}^{t}{{p_i}^{{\rm ord}_{p_i}(n)}}\ .$$
Since
$p_i \equiv -1 \bmod \pp$ for $i = 1, \ldots, t$,
we see that $a_n( E) \equiv a_n( g) \bmod \pp$ if $n$ is coprime to~$s$, and thus 
$(E(q)-g(q)) \bmod \pp$ 
is a power series in $q^s$, i.e., 
there is an $h(q) \in (\Of/\pp)[[q]]$
with $ h(q^s)$ equal to $(E(q)-g(q)) \bmod \pp$.
By~\cite[Lemma~2.1.1]{ohta:eisenstein}, $h(q)$ is the $q$-expansion
of a 
holomorphic modular form, which we again denote~$h$, in~$\omega^{\tensor 2}$
on~$\Gamma_0(M/s)$ defined over~$\Of/\pp$.

Let $g' = h/2$.
We shall now show that $g'$ is special of level~$M/s$.
Let $n$ be a positive integer, $m' = \frac{n}{(n,s)}$,
and $e = {\rm ord}_s(n)$ (so $n = m' s^e$). 
Then considering that $m'$ is coprime to~$s$, we have
$a_n(E) = a_{m'}(E) a_{s^{e+1}}(E)$ since $E$ is an eigenfunction and 
$a_n(g) \equiv a_{m'}(g) a_{s^{e+1}}(g) \bmod \pp$ since $g$ is special. So
\begin{eqnarray}\label{eqn1}
a_n(h)&  = & a_n(E) - a_n(g) \nonumber\\
& \equiv &
a_{m'}(E) a_{s^{e+1}}(E) - a_{m'}(g) a_{s^{e+1}}(g) \bmod \pp \nonumber\\
& \equiv & a_{m'}(g) (a_{s^{e+1}}(E) - a_{s^{e+1}}(g))\bmod \pp,
\end{eqnarray}
where the last congruence follows since $a_{m'}(g) \equiv a_{m'}(E) \bmod \pp$,
considering that $m'$ is coprime to~$s$.

	From the definitions and the fact that \(s \mid \mid M\),  we find that
	\[
	a_{s^{e+1}}(E) = \sigma \bigg( {\frac{s^{e+1}}{s}} \bigg) \prod_{p \mid M, p\neq s} p^{\operatorname{ord}_{p}(s^{e+1})} = \sigma(s^{e})
	\]
	and
	\[
	a_{s^{e+1}}(g) \equiv \sigma \bigg( {\frac{s^{e+1}}{s}}\bigg) \prod_{p \mid M} (-1)^{\operatorname{ord}_{p}(s^{e+1})} \equiv  \sigma(s^{e})(-1)^{e+1} \bmod \pp. 
	\]

So we have
\begin{eqnarray}\label{eqn2}
a_{s^{e+1}}(E) - a_{s^{e+1}}(g) 
 \equiv 
 (1-(-1)^{e+1})\sigma(s^{e}) 
\bmod \pp .
\end{eqnarray}
We claim that 
$(1-(-1)^{e+1})\sigma(s^{e}) = 2 \sigma(s^{e})$: if 
$e \equiv 1 \bmod 2$, then $\sigma(s^e) = 0$, so the equality holds trivially,
and if $e \equiv 0 \bmod 2$, then $1-(-1)^{e+1} = 2$, so again the equality holds.
Putting this proven claim in~(\ref{eqn2}) and the result 
in~(\ref{eqn1}),
we get
\begin{eqnarray}\label{eqn4}
a_n(h) \equiv  a_{m'}(g) \cdot 2\sigma(s^e) 
\equiv 2 \sigma\bigg(\frac{m'}{(m',M)}\bigg)  \prod_{p \mid M} (-1)^{{\rm ord}_p(m')}  
\cdot \sigma(s^e) 
\bmod \pp, 
\end{eqnarray}
where the last congruence follows since $g$ is special at level~$M$.
Now since $n = m' s^e$, with $m'$ coprime to~$s$ and $s \ndiv (M/s)$, we have
\begin{eqnarray}\label{eqn5}
\sigma\bigg(\frac{m'}{(m',M)}\bigg)  \sigma(s^e)  
= \sigma\bigg(\frac{m' s^e}{(m',M)}\bigg) 
= \sigma\bigg(\frac{m' s^e}{(m' s^e,M/s)}\bigg) 
= \sigma\bigg(\frac{n}{(n,M/s)}\bigg) 
\end{eqnarray}
and 
\begin{eqnarray}\label{eqn6}
\prod_{p \mid M} (-1)^{{\rm ord}_p(m')} 
=  \prod_{p \mid M, \ p \neq s} (-1)^{{\rm ord}_p(m's^e)} 
= \prod_{p \mid (M/s)} (-1)^{{\rm ord}_p(n)}.
\end{eqnarray}
Using~(\ref{eqn5}) and~(\ref{eqn6}) in~(\ref{eqn4}), and
recalling that $g' = h/2$, we see that
\begin{eqnarray*}
a_n(g') \equiv 
\sigma\bigg(\frac{n}{(n,M/s)}\bigg) \prod_{p \mid (M/s)} (-1)^{{\rm ord}_p(n)}
\bmod \pp,
\end{eqnarray*}
i.e., $g'$ is special of level~$M/s$, as claimed.
\end{proof}

Starting with $f \bmod \pp$ (note that $r \ndiv N$), and repeatedly using the claim,
we see that there is 
a holomorphic modular form that is special of level~$1$, which is nontrivial
since the coefficient of~$q$ is~$1 \bmod \pp$ for a special form
(of any level). 
But by~\cite[Lemma II.5.6(a)]{mazur:eisenstein},
there are no nontrivial holomorphic modular forms
of level~$1$ in $\omega^{\tensor 2}$ defined over 
a field of characteristic other than~$2$ and~$3$.
This contradiction proves the case where $r > 3$.

\comment{
The strategy of the proof 
Proposition~3.6 of~\cite{tor} is that 
if the conclusion of that proposition is not true, then 
one can lower the level of $f \bmod \pp$ 
(the notation is as in loc. cit.)  using
a suitable Eisenstein series $E \bmod \pp$ obtained from 
Proposition~\ref{prop:eis} one prime at a time to
get a certain \mbox{$\bmod\ r$} form of level~$1$ in whose
$q$-expansion, the coefficient of~$q$ is~$1$,
which contradicts \cite[Lemma II.5.6(a)]{mazur:eisenstein}, which says that 
there are no nontrivial \mbox{$\bmod\ r$} forms
of level~$1$ defined over a field of characteristic other than~$2$ and~$3$.
}

Suppose now that $r=3$ and $\Of/\pp \isom \Z/3\Z$.
In the proof of the claim above, 
if $M$ is not a prime, then 
we have at least one prime $p \divs M$
where $a_p(E) = p$, and so by 
by part~(a) of Proposition~\ref{prop:eis}, we 
can talk of $E \bmod  \pp = E \bmod 3$ even though $r \not> 3$, and the 
conclusion of the claim holds. 
So if $N$ is not prime,
then starting with $f \bmod \pp$ (note that $r \ndiv N$), and repeatedly using the claim,
we see that there is 
a holomorphic modular form that is special of level a prime, call it~$p$. 
If $N$ were prime to start with, then we can take~$p=N$ to again
have a holomorphic modular form that is special of level the prime~$p$.

At this point, 
we follow a strategy that goes back to Mazur (see the proof of Prop.~II.14.1 on p.~114
in~\cite{mazur:eisenstein}), part of which is used in
the proof of Propostion 3.4.2 in~\cite{ohta:eisenstein}. For
the sake of being clear, rather than simply referring
to the sources above for the rest of the proof, we give the full details in our context.

Since $\Of/\pp \isom \Z/3\Z$ by hypothesis, 
$f \bmod \pp$ is a form defined over~$\Z/3\Z$. 
Suppose first that $p \equiv 1 \bmod 3$.
Then we can consider $E \bmod 3$ by part~(b) of Proposition~\ref{prop:eis}, 
and lower the level again to get a form of level~$1$ 
for whom the coefficient of~$q$ is~$1$.  The
constant term of this form is~$a_0(E)$ modulo~$3$, and
$a_0(E) = (p-1)/24$. If $p \not\equiv 1 \bmod 9$, then $a_0(E)$ is a unit \mbox{$\bmod\ 3$},
and by multiplying the form above by the inverse of this 
unit, we get
a \mbox{$\bmod\ 3$} form with constant term~$1$ and for which the coefficient of~$q$ is non-zero, which
contradicts \cite[Prop~II.5.6(c)(ii)]{mazur:eisenstein}.
If $p \equiv 1 \bmod 9$, then $a_0(E)$ is zero modulo~$3$, and this contradicts
\cite[Prop~II.4.10]{mazur:eisenstein}.

Finally, we are left with the case $p \equiv 2 \bmod 3$ (recall 
that we are assuming that $ r \ndiv N$, so $r \neq p$, 
and $r =3$, so $p\neq 3$). 
We consider the form $h= 3 E - \text{``}\times 3\text{''} f$ 
defined over~$\Z/9\Z$, with notation as in Lemma~3.3.2 in~\cite{ohta:eisenstein}
(where the construction $\text{``}\times 3\text{''}$ is attributed to~\cite[p.~86]{mazur:eisenstein}).
Since $a_0(f) = 0$,
we have $a_0(h) =  a_0(3E) = (1 - p)/8$ and since $p \equiv 2 \bmod 3$, this 
is a unit \mbox{$\bmod\ 9$}. 
Multiplying~$h$ by the inverse of this unit, we get 
a \mbox{$\bmod\ 9$} form  with $q$-expansion beginning with~$1$ 
and for which the coefficient of~$q$ is non-zero, which 
contradicts \cite[Lemma II.5.6(b)]{mazur:eisenstein} as $9  \ndiv 12$. 
\end{proof}

\begin{rmk}   \label{rmk3}
We could like to take the opportunity to point out a correction 
in the  proof of 
Proposition~3.6 of~\cite{tor}: in equation~(3.6) in loc. cit., the second line is 
not justified, and a priori need not be true. The proposition above
generalizes Proposition~3.6 of loc. cit., and so the reader 
should see the proof above instead of the one given in loc. cit.
\end{rmk}
\comment{
Using SAGE, we found that for any optimal elliptic curve~$E$ in 
Cremona's database~\cite{cremona:onlinetables} of elliptic curves (which includes all
curves with conductor less than~$130,000$), 
if $N$ is square-free and the odd part of~$E(\Q)_{\tor}$ is nontrivial, 
then $w_p = -1$ for at least one prime~$p$ that divides~$N$
(which indicates that the hypothesis that $r$ not divide~$3N$ in Proposition~\ref{prop:mazur2}
may be redundant).
The statement in the previous sentence is not true without the hypothesis
that $N$ is square-free. 
For example, 
the optimal elliptic curve~490a1 (note that $490= 2 \cdot 5 \cdot 7^2$) has a $3$-torsion point,
but $w_p = 1$ for all $p$ dividing~$490$. At the same time,
for all optimal elliptic curves in Cremona's database (of arbitrary conductor)
that have a $5$ or~$7$-torsion point, there is a prime~$p$ that divides the conductor
such that  $w_p=-1$. 

}

\begin{prop}\label{prop:red}
Suppose that $r \ndiv 2N$. If $A[\m]$ is reducible, then so is 
$\rho_{\m}$. 
\end{prop}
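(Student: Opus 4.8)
The plan is to prove the contrapositive: assuming $\rho_{\m}$ is irreducible, I would show that $A[\m]$ is irreducible as well. The whole argument rests on comparing $A[\m]$ with the full $\m$-torsion $J_0(N)[\m]$ of the Jacobian. Since $A$ is the abelian subvariety of $J_0(N)$ cut out by the Hecke action, it is stable under $\T$ (the ideal $I_f \subseteq \m$ is a $\T$-ideal and $\T$ is commutative), so $A[\m] = A \cap J_0(N)[\m]$ is a Galois-stable $\T/\m$-subspace of $J_0(N)[\m]$. It therefore suffices to prove two things: that $J_0(N)[\m]$ is an irreducible $\T/\m\,[\mathrm{Gal}(\Qbar/\Q)]$-module, and that $A[\m] \neq 0$. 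Granting these, the nonzero Galois-stable subspace $A[\m]$ must coincide with $J_0(N)[\m]$ and hence be irreducible, which is the desired contrapositive.

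The key input is multiplicity one. Because $r \ndiv 2N$ --- so that $r$ is odd and $r \ndiv N$ --- and $\rho_{\m}$ is assumed irreducible, the standard multiplicity one theorem (see~\cite[Prop.~1.3]{congnum}; the representation $\rho_{\m}$ itself is constructed in~\cite[Prop.~5.1]{ribet:modreps}) gives $\dim_{\T/\m} J_0(N)[\m] = 2$. By the Eichler--Shimura relation every Jordan--H\"older factor of $J_0(N)[\m]$ carries the Frobenius eigenvalues prescribed by $\rho_{\m}$, so the semisimplification of the two-dimensional module $J_0(N)[\m]$ is a sum of copies of the irreducible $\rho_{\m}$ and hence equals $\rho_{\m}$; a two-dimensional module with irreducible semisimplification is itself irreducible. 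This is exactly the step that uses $r \ndiv 2N$, since multiplicity one (and hence this dimension count) can fail when $r = 2$ or when $r \divs N$.

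It remains to record that $A[\m] \neq 0$, which is standard. The Tate module $\mathrm{Ta}_r A$ is a finitely generated, faithful module over the semilocal ring $\T_f \tensor \Z_r$ --- faithful because the composite $\T_f \hookrightarrow \mathrm{End}(A) \hookrightarrow \mathrm{End}(\mathrm{Ta}_r A)$ is injective --- so every maximal ideal of $\T_f \tensor \Z_r$, in particular the one lying over $\m'$, belongs to its support; reducing mod $r$ and extracting $\m'$-torsion then shows that the $\m$-torsion of $A$ is nontrivial. With $A[\m] \neq 0$ in hand, the reduction in the first paragraph completes the proof.

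The main obstacle is pinning down the multiplicity one statement in exactly the generality I need, namely $\dim_{\T/\m} J_0(N)[\m] = 2$ for arbitrary (not necessarily square-free) $N$ under only the hypotheses $r \ndiv 2N$ and $\rho_{\m}$ irreducible. For $r \ndiv N$ with $\rho_{\m}$ irreducible this is classical, the genuinely delicate failures of multiplicity one being confined to the cases $r = 2$ or $r \divs N$ that the hypothesis excludes; so the cited results apply. Everything else --- the $\T$-stability giving $A[\m] \subseteq J_0(N)[\m]$, the nonvanishing of $A[\m]$, and the passage to the contrapositive --- is routine.
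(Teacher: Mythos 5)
Your proposal is correct and follows essentially the same route as the paper: the paper also proves the contrapositive, cites Ribet's result (with the hypothesis $r \ndiv 2N$) to conclude that $J_0(N)[\m]$ is irreducible, and then observes that the subrepresentation $A[\m]$ must be trivial or all of $J_0(N)[\m]$. The only difference is cosmetic: where you add a (correct, standard) Tate-module/Nakayama argument that $A[\m] \neq 0$, the paper simply notes that the trivial case is vacuously irreducible, so your extra step, while sound, is not needed.
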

\begin{proof} We prove the contrapositive. 
So suppose $\rho_{\m}$ is irreducible. Then by~\cite[Prop.~5.2(b)]{ribet:modreps}, 
$J_0(N)[\m]$ is also irreducible. But $A[\m]$ is a subrepresentation 
of $J_0(N)[\m]$. So $A[\m]$ is either trivial or all of~$J_0(N)[\m]$; in either case, it
is irreducible, as was to be shown.
\end{proof}

\begin{proof}[Proof of Theorem~\ref{thm:main}]
Recall that we are assuming that $N$ is square-free, $r \ndiv 6N$, and 
either $\rho_{\m}$ or~$A[\m]$ is reducible. 
By Proposition~\ref{prop:red}, in any case, $\rho_{\m}$ is reducible. Thus all the results
in this section before Proposition~\ref{prop:red} apply.
We modify the proof of Theorem~1.2 of~\cite{tor} in Section~3 of loc. cit., by
replacing \mbox{$\bmod\ r$} by~\mbox{$\bmod\ \pp$}, Proposition~3.6 by our Proposition~\ref{prop:mazur2},
Lemma~3.2 by our Lemma~\ref{lem:dum}, 
Lemma~3.1 by our Lemma~\ref{lem:mazur0}, $\Z/r\Z$ by~$\Of/\pp$, Lemma~3.5  
by~\cite[Lemma~2.1.1]{ohta:eisenstein}, and
``$f$ is ordinary at~$r$'' by ``$f$ is ordinary at~$\pp$'', to conclude that~$A[\m] \cap C_E$
is nontrivial. The remaining conclusions of the theorem follow considering that
$C_E$ is rational, as $N$ is square-free. 
\end{proof}

\bibliographystyle{amsalpha}         

\bibliography{biblio}
\end{document}